\newtheorem{theorem}{Theorem}[section]
\newtheorem{proposition}[theorem]{Proposition}
\newtheorem{corollary}[theorem]{Corollary}
\newtheorem{lemma}[theorem]{Lemma}
\newtheorem{definition}[theorem]{Definition}
\newtheorem{remark}[theorem]{Remark}
\newenvironment{customthm}[1]
  {\innercustomthm}
  {\endinnercustomthm}
\numberwithin{equation}{section}
\DeclareMathOperator{\df}{d\textit{f}}
\renewcommand{\leq}{\leqslant}
\renewcommand{\geq}{\geqslant}
\title[Modulus of Continuity of Solutions on Stein Spaces]{Modulus of Continuity of Solutions to Complex Monge-Ampère Equations on Stein Spaces}
\author{Guilherme Cerqueira-Gonçalves}
\address{Institut de Mathématiques de Toulouse; UMR 5219, Université de Toulouse;
CNRS, UPS, 118 route de Narbonne, F-31062 Toulouse Cedex 9, France}
\email{guicerqg@gmail.com / guilherme.cerqueira\mathunderscore{}goncalves@math.univ-toulouse.fr}
\urladdr{\href{https://guicerqmath.github.io/}{https://guicerqmath.github.io/}}
\date{10 November 2024}
\pgfplotsset{compat=1.18}
\begin{document}

\begin{abstract}
    In this paper, we study the modulus of continuity of 
    solutions to Dirichlet problems for complex Monge-Ampère 
    equations with $L^p$ densities on Stein spaces with 
    isolated singularities.
    In particular, we prove such solutions are Hölder 
    continuous outside singular points
    if the boundary data is Hölder continuous. 
\end{abstract}

\maketitle

\tableofcontents

\section*{Introduction}\label{Sing: background}

Let $X$ be a complex space that is reduced, locally 
irreducible, and of complex dimension 
$n \geqslant 1$ with an isolated singularity, $X_{\text{sing}}$. 
Equip $X$ with a Hermitian metric whose fundamental 
form is $\beta$, a positive $(1,1)$-form, and 
let $d_{\beta}(\cdot, \cdot)$ be the distance 
induced by $\beta$. 
Let $\Omega$ be a bounded, strongly pseudoconvex open 
subset of $X$, such that $X_{\text{sing}} \subset \Omega$,
and fix $\rho$ as a smooth, strictly 
plurisubharmonic defining function, i.e., $\Omega = \{\rho < 0\}$, 
hence $\Omega$ is a Stein space. Given $\phi \in C^0(\partial \Omega)$ 
and $f \in L^p(\Omega, \beta^n)$ with $p > 1$, 
we consider the Dirichlet problem:

\noindent
\[
MA(\Omega, \phi, f):
\begin{cases}
    {(dd^c u)}^n = f \beta^n & \text{in} \quad \Omega,  \\
    u = \phi & \text{on} \quad \partial \Omega,
\end{cases}
\]

\noindent where $d = \partial + \overline{\partial}$ and $d^{c} = i(\overline{\partial} - \partial)$.
This Dirichlet problem for the complex Monge-Ampère 
operator has been intensively 
studied in domains of $\mathbb{C}^n$ (see~\cite{GZ17} for a historical account).

The study of complex Monge-Ampère equations is key 
to understanding canonical metrics in Kähler manifolds. 
This significance extends to mildly singular 
varieties as well. 
Interactions with birational geometry, such as the 
Minimal Model Program, make it enticing across 
fields to comprehend these equations.

In this paper, our objective is to explore the impact of 
boundary data and singularity types on the modulus of 
continuity of solutions. In particular, we prove that if 
$\phi$ is \textit{Hölder continuous}, then so is 
the solution of $MA(\Omega, \phi, f)$ outside the singular 
point.

Regarding applications of the Hölder regularity of solutions to 
complex Monge-Ampère equations, we refer to~\cite{GGZ23c} 
for some geometric consequences 
of the Hölder continuity of Kähler-Einstein potentials and 
to~\cite{DNS10} for more details about applications in 
complex dynamics.

Several works have focused on the Hölder continuity of solutions, 
both on domains of $\mathbb{C}^{n}$ (see~\cite{GKZ08,Cha14,BKPZ16,Cha15}) 
and on compact complex manifolds 
(see~\cite{DDGHKZ14,KN18,LPT21,DKN22}).

However, there are only a few literature dealing 
with this problem on domains with singularities.
Recently, it was proved in~\cite{GGZ23a} that the 
solution $u = u(\Omega, \phi, f)$ for 
$MA(\Omega, \phi, f)$ is continuous and unique in $\overline{\Omega}$. 
Further smoothness properties have been provided in~\cite{DFS23,F23,GT23} 
on the regular part of domains that contain an isolated singularity in the 
case of smooth boundary data.

Our main result is the following:

\begin{customthm}{Theorem A}\label{teo: main result}

Assume that $\phi \in C^0(\partial \Omega)$ and $f \in L^p(\Omega, \beta^n)$ with $p > 1$. Then the unique solution 
$u \in PSH(\Omega) \cap C^0(\overline{\Omega})$ to $MA(\Omega, \phi, f)$ 
has the following modulus of continuity at $x \in \Omega$:

\noindent
\[
\omega_{u, x}(t) \leqslant C_x \max\{\omega_{\phi}(t^{1/2}), t^{\frac{1}{nq+1}}\}
\]

\noindent for some constant $C_x > 0$ such that $C_x \to +\infty$ as $x \to X_{\text{sing}}$, where $\omega_{u, x}$ is the modulus 
of continuity of $u$ at the point $x$ and $\frac{1}{p} + \frac{1}{q} = 1$. 
\end{customthm}

In particular,~\ref{teo: main result} implies:

\begin{customthm}{Corollary B}\label{teo: holder sing}
When $\phi \in C^{0, \alpha}(\partial \Omega)$, $0 < \alpha \leqslant 1$, and $f \in L^p(\Omega, \beta^n)$ with $p > 1$, the 
unique solution $u \in PSH(\Omega) \cap C^0(\overline{\Omega})$ 
to $MA(\Omega, \phi, f)$ is 
$\alpha^*$-\textit{Hölder continuous} outside the singular point, 
for $\alpha^* < \min\{\frac{\alpha}{2}, \frac{1}{nq+1}\}$. 
\end{customthm}

\begin{customthm}{Remark}
This Hölder exponent\footnote{Relative to the 
induced distance of the chosen $\beta$.}
is the same for smooth domains~\cite[Theorem 1.2]{Cha15}.
\end{customthm}

\begin{customthm}{Remark}
    One can similarly treat the case when $X$ has finitely 
    many isolated singularities.
\end{customthm}

\subsection*{Acknowledgements}

The author would like to first thank his PhD advisor, 
Vincent Guedj, for his support, guidance and comments. 
The author also thanks Ahmed Zeriahi and Chung-Ming Pan 
for useful discussions and references. The author would
like to thank the referee for useful comments and corrections. This work 
received support from the University Research School 
EUR-MINT
(State support managed by the National Research 
Agency for Future Investments
program bearing the reference ANR-18-EURE-0023).

\section{Preliminaries}\label{Sing: preliminaries}

\subsection{Complex analysis on Stein spaces}

Throughout this paper, we let $X$ be a 
reduced, locally irreducible complex 
analytic space of pure dimension $n \geqslant 1$.
 We denote by $X_{\text{reg}}$ the 
 complex manifold of regular points of $X$ and 
$
X_{\text{sing}}:=X \setminus X_{\text{reg}}
$
the set of singular points, which is an analytic subset 
of $X$ with complex codimension $\geqslant 1$.

By definition, for each point $x_{0} \in X$, there exists 
a neighborhood $U$ of $x_{0}$ and a local embedding 
$j: U \hookrightarrow \mathbb{C}^N$ onto an analytic 
subset of $\mathbb{C}^N$ for some $N \geqslant 1$.
These local embeddings allow us to define the 
spaces of smooth forms of given degree on $X$ as smooth 
forms on $X_{\text{reg}}$ that are locally on $X$  
restrictions of an ambient form on $\mathbb{C}^N$. 
Other differential notions and operators, such as 
holomorphic and plurisubharmonic functions, can also 
be defined in this way. (See~\cite{Dem85} for more details).
Two different notions can be defined for 
plurisubharmonicity:

\begin{definition}\label{def: psh}
Let $u: X \to \mathbb{R} \cup\{-\infty\}$ 
be a given function.

\begin{enumerate}
    \item We say that $u$ is plurisubharmonic (psh for short) on $X$ 
    if it is locally the restriction of a 
    plurisubharmonic function in a local embedding 
    of $X$.

    \item We say that $u$ is weakly plurisubharmonic 
    on $X$ if $u$ is locally bounded from above on 
    $X$, and its restriction to the complex manifold 
    $X_{\mathrm{reg}}$ is plurisubharmonic.

\end{enumerate}

\end{definition}

Forn{\ae}ss and Narasimhan~\cite[Theorem 5.3.1]{FN80} proved 
that $u$ is plurisubarmonic on $X$ if and only if, 
for any analytic disc 
$h: \mathbb{D} \to X$, the function 
$u \circ h$ is subharmonic or 
identically $-\infty$.
If $u$ is weakly plurisubharmonic on $X$, then $u$ 
is plurisubharmonic on $X_{\text{reg}}$, and thus 
upper semi-continuous on $X_{\text{reg}}$. 
It is natural to extend $u$ to $X$ using the following formula:

\noindent
\begin{equation}\label{eq: ext sing pts}
    u^*(x):=\limsup _{X_{\mathrm{reg}} \ni y \rightarrow x} u(y), \quad x \in X.
\end{equation}

The function $u^*$ is upper semi-continuous, 
locally integrable on $X$, and satisfies 
$dd^c u^* \geqslant$ 0 in the sense of 
currents on $X$~\cite[Théorème 1.7]{Dem85}. The two 
notions are equivalent when $X$ is locally 
irreducible, as shown in~\cite[Théorèm 1.10]{Dem85}:

\begin{theorem} Let $X$ be a 
    locally irreducible analytic space and 
    $u: X \to $ $\mathbb{R} \cup\{-\infty\}$ 
    be a weakly plurisubharmonic function on $X$. Then 
    the function $u^*$ defined by \eqref{eq: ext sing pts} is 
    psh on $X$.
    
\end{theorem}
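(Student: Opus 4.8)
The plan is to make the statement local and then invoke the Forn\ae ss--Narasimhan criterion recalled above. Since plurisubharmonicity in the sense of Definition \ref{def: psh}(1) is local, I would fix $x_{0}\in X$ together with a local embedding $j\colon U\hookrightarrow\mathbb{C}^{N}$ and prove that $u^{*}$ is psh on $U$. By \cite{FN80} this is equivalent to showing that for every analytic disc $h\colon\mathbb{D}\to U$ the function $u^{*}\circ h$ is subharmonic on $\mathbb{D}$ or identically $-\infty$. As already recalled, $u^{*}$ is upper semicontinuous and locally bounded above (also $dd^{c}u^{*}\geqslant 0$ as a current), so $u^{*}\circ h$ is upper semicontinuous and locally bounded above on $\mathbb{D}$; the only thing left to check is the sub-mean value inequality — and, as the example below shows, the current inequality alone will not give it.

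Next I would reduce each disc to its behaviour over the singular locus. Set $E:=h^{-1}(X_{\mathrm{sing}})$, an analytic subset of $\mathbb{D}$. If $E\neq\mathbb{D}$ then $E$ is discrete, $h$ sends $\mathbb{D}\setminus E$ into $X_{\mathrm{reg}}$, and there $u=u^{*}$ is psh, so $u^{*}\circ h$ is subharmonic on $\mathbb{D}\setminus E$; by the removable singularity theorem for subharmonic functions it extends to a subharmonic function $\tilde v$ on $\mathbb{D}$ with $\tilde v=u^{*}\circ h$ off $E$ and $\tilde v(t)=\limsup_{\mathbb{D}\setminus E\ni s\to t}u^{*}(h(s))$ for $t\in E$. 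Upper semicontinuity of $u^{*}\circ h$ gives $u^{*}(h(t))\geqslant\tilde v(t)$, so the whole difficulty is the reverse inequality $u^{*}(h(t))\leqslant\tilde v(t)$ at points of $E$, i.e.\ that the value of $u^{*}$ at a singular point does not exceed the $\limsup$ of $u$ taken along the disc. This is exactly where local irreducibility is indispensable: it fails for reducible germs, since on $\{z_{1}z_{2}=0\}\subset\mathbb{C}^{2}$ the function equal to $0$ on $\{z_{2}=0\}\setminus\{0\}$ and to $\log|z_{2}|$ on $\{z_{1}=0\}\setminus\{0\}$ is weakly psh, while its $\limsup$-extension, restricted to the line $\{z_{1}=0\}$, jumps up at the origin and is not subharmonic.

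To obtain the inequality I would pass to the local normalization $\pi\colon\hat X\to X$: it is a finite holomorphic surjection, biholomorphic over $X_{\mathrm{reg}}$, and since the germs of $X$ are irreducible ($X$ is unibranch) $\pi$ is a homeomorphism onto $X$, with $\hat X$ normal. I would check that $\hat u:=u^{*}\circ\pi$ is weakly psh on $\hat X$ and equal to its own $\limsup$-extension: it is psh on the dense open set $\pi^{-1}(X_{\mathrm{reg}})\subset\hat X_{\mathrm{reg}}$, it is locally bounded above, $\pi^{-1}(X_{\mathrm{sing}})$ is a proper analytic subset of $\hat X$, and the removable singularity theorem for plurisubharmonic functions across analytic subsets (Grauert--Remmert), combined with $\pi$ being a homeomorphism, yields both plurisubharmonicity of $\hat u$ on all of $\hat X_{\mathrm{reg}}$ and the identity $\hat u(\hat y)=\limsup_{\hat X_{\mathrm{reg}}\ni\hat z\to\hat y}\hat u(\hat z)$. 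Since $\hat X$ is \emph{normal}, the already known case of the theorem for normal complex spaces (Grauert--Remmert) shows $\hat u$ is psh on $\hat X$. Then, for an analytic disc $h$ with $h(\mathbb{D})\not\subseteq X_{\mathrm{sing}}$, I would lift it to $\hat X$ — over $X_{\mathrm{reg}}$ via the biholomorphism, and across the discrete set $E$ because the coordinate components of the lift are bounded holomorphic functions — obtaining a holomorphic $\hat h\colon\mathbb{D}\to\hat X$ with $\pi\circ\hat h=h$; hence $u^{*}\circ h=\hat u\circ\hat h$ is subharmonic, which settles all such discs.

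The main remaining obstacle is the case $h(\mathbb{D})\subseteq X_{\mathrm{sing}}$. For an isolated singularity — the setting of the rest of the paper — such a disc is constant and $u^{*}\circ h$ is trivially subharmonic, so nothing more is needed. In general this requires a separate argument: either an induction on $\dim X$ after a local decomposition of $X_{\mathrm{sing}}$ into irreducible components (verifying that $u^{*}$ restricts to a weakly psh function on each of them), or an approximation of $h$ by analytic discs not contained in $X_{\mathrm{sing}}$ — possible because $X_{\mathrm{reg}}$ is dense and $X$ is locally irreducible — followed by a limiting argument using Fatou's lemma and the upper semicontinuity of $u^{*}$. In all cases the subtlety is localized at the singular set, and the mechanism driving the proof is that local irreducibility forces the $\limsup$ in \eqref{eq: ext sing pts} to be detected through the regular part.
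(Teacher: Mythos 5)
Your argument is necessarily compared here against the cited source rather than an in-paper proof: the paper quotes this statement from \cite{Dem85} (Théorème 1.10) without proving it. Your route — reduce via the Forn\ae ss--Narasimhan disc criterion \cite{FN80}, pass to the normalization $\pi\colon\hat X\to X$ (a finite homeomorphism precisely because $X$ is locally irreducible), show $\hat u:=u^*\circ\pi$ equals its own limsup-extension and is psh on the normal space $\hat X$ by Grauert--Remmert, and lift a disc $h$ across the discrete set $h^{-1}(X_{\mathrm{sing}})$ by continuity plus Riemann removability — is sound, and the $\{z_1z_2=0\}$ example correctly isolates where local irreducibility is used. For every disc with $h(\mathbb{D})\not\subseteq X_{\mathrm{sing}}$ your proof is complete (including the $\equiv-\infty$ alternative, which is absorbed by applying Forn\ae ss--Narasimhan to $\hat u$ upstairs), so in particular it settles the theorem under the paper's standing hypothesis of an isolated singularity.

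The genuine gap is the case $h(\mathbb{D})\subseteq X_{\mathrm{sing}}$ when the singular locus has positive dimension, which the theorem as stated allows, and neither fallback you offer closes it. For (i), the claim that $u^*$ restricts to a weakly psh function on each irreducible component of $X_{\mathrm{sing}}$ is exactly the kind of statement that needs the theorem (it is not implied by $dd^cu^*\geqslant 0$ as a current), and even granting it, the inductive hypothesis would only control the limsup-extension taken from the regular part of that component, which need not coincide with $u^*|_{X_{\mathrm{sing}}}$, whose defining limsup is taken from $X_{\mathrm{reg}}$, transverse to the component. For (ii), there is no justification that a disc inside $X_{\mathrm{sing}}$ can be approximated by discs meeting $X_{\mathrm{reg}}$, and even if it could, upper semicontinuity only gives $\limsup_j u^*(h_j(t))\leqslant u^*(h(t))$, which is the wrong direction to transfer the sub-mean-value inequality to the limit at the center. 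The good news is that the repair lives inside your own framework: a finite bijective holomorphic map between reduced spaces is biholomorphic over the complement of a nowhere dense analytic subset of its target, so restricting $\pi$ to $\pi^{-1}(X_{\mathrm{sing}})\to X_{\mathrm{sing}}$ and iterating (local dimensions strictly drop, and the final stratum forces the disc to be constant) shows that \emph{every} analytic disc in $X$ lifts continuously, hence holomorphically after Riemann extension across a discrete set at each stage, to $\hat X$; applying Forn\ae ss--Narasimhan to the psh function $\hat u$ on $\hat X$ then handles all discs at once and removes the case distinction entirely.
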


Note that since $u$ is plurisubharmonic on 
$X_{\text{reg}}$, we have $u^*=u$ on 
$X_{\text{reg}}$. Then $u^*$ is the 
upper semi-continuous extension of 
$\left.u\right|_{X_{\mathrm{reg}}}$ to $X$. 
With this, we have the following:

\begin{corollary}
Let $\mathcal{U} \subset \mathrm{PSH}(X)$ be a 
non-empty family of plurisubharmonic functions 
which is locally bounded from above on $X$. 
Then its upper envelope

\noindent
$$
{U}:=\sup \{u ; u \in \mathcal{U}\}
$$

\noindent is a well-defined Borel function whose 
upper semi-continuous regularization $U^*$ is 
psh on $X$.
\end{corollary}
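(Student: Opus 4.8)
The strategy is to transfer the problem to the complex manifold $X_{\mathrm{reg}}$, where the classical theory of upper envelopes applies, and then to cross the singular locus by invoking the theorem of Demailly quoted above. Write $\mathcal{U}|_{X_{\mathrm{reg}}}:=\{u|_{X_{\mathrm{reg}}}:u\in\mathcal{U}\}$; this is a non-empty family of psh functions on the manifold $X_{\mathrm{reg}}$ which is locally bounded from above on $X$, hence in particular on $X_{\mathrm{reg}}$. Let $U'$ denote the upper semi-continuous regularization, taken \emph{inside} $X_{\mathrm{reg}}$, of $\sup\mathcal{U}|_{X_{\mathrm{reg}}}$. By the classical result on upper envelopes of psh functions on complex manifolds — whose proof rests on Choquet's lemma, which moreover furnishes an at most countable subfamily $\{u_j\}\subset\mathcal{U}$ with $\bigl(\sup_j u_j|_{X_{\mathrm{reg}}}\bigr)^*=U'$ on $X_{\mathrm{reg}}$ — the function $U'$ is psh on $X_{\mathrm{reg}}$.

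Next I would extend $U'$ to all of $X$ by the $\limsup$-formula \eqref{eq: ext sing pts}, obtaining $(U')^*$. Since $U'\leq\sup_{u\in\mathcal{U}}u$ on $X_{\mathrm{reg}}$ and $\mathcal{U}$ is locally bounded above, the same local upper bounds pass to $U'$ and then to $(U')^*$; thus $U'$, regarded as a function on $X$, is weakly plurisubharmonic in the sense of Definition \ref{def: psh}(2). The theorem of Demailly quoted above then gives that $(U')^*$ is psh on $X$. The proof is complete once we check that $(U')^*$ coincides with $U^*$, the upper semi-continuous regularization on $X$ of the genuine pointwise envelope $U=\sup\mathcal{U}$.

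One inequality is soft: the restriction of $U^*$ to $X_{\mathrm{reg}}$ is u.s.c. there and dominates $U|_{X_{\mathrm{reg}}}$, so by minimality of the regularization $U^*\geq U'$ on $X_{\mathrm{reg}}$; applying $\limsup_{X_{\mathrm{reg}}\ni y\to x_0}$ and using that $U^*$ is u.s.c. on $X$ gives $(U')^*(x_0)\leq U^*(x_0)$ at every $x_0\in X_{\mathrm{sing}}$, whence $U^*\geq(U')^*$ on $X$. For the reverse inequality I must show $(U')^*\geq U$ pointwise: on $X_{\mathrm{reg}}$ this is immediate from $(U')^*\geq U'\geq U$, and at $x_0\in X_{\mathrm{sing}}$ it reduces to $u(x_0)\leq(U')^*(x_0)$ for each $u\in\mathcal{U}$, which follows once we know that
\[
u(x_0)=\limsup_{X_{\mathrm{reg}}\ni y\to x_0}u(y)\qquad\text{for every }u\in\mathrm{PSH}(X),
\]
since then $u(x_0)\leq\limsup_{X_{\mathrm{reg}}\ni y\to x_0}U'(y)=(U')^*(x_0)$, using $U'\geq u$ on $X_{\mathrm{reg}}$. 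This identity is, in my view, the only genuine point in the argument — morally, that no spurious ``mass'' of $U$ is created at a singular point. I would establish it via the Forn\ae ss–Narasimhan characterization: choose a non-constant analytic disc $h\colon\mathbb{D}\to X$ with $h(0)=x_0$ and $h(\mathbb{D}\setminus\{0\})\subset X_{\mathrm{reg}}$ (available since $X$ is locally irreducible of dimension $\geq 1$ and, in the case at hand, has only isolated singularities), so that $u\circ h$ is subharmonic on $\mathbb{D}$, and the sub-mean-value inequality at $0$ yields $u(x_0)=(u\circ h)(0)\leq\limsup_{\zeta\to0}(u\circ h)(\zeta)\leq\limsup_{X_{\mathrm{reg}}\ni y\to x_0}u(y)$; alternatively one may cite \cite{Dem85}. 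Combining the two inequalities, $U^*=(U')^*$ is psh on $X$.

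Finally, the Borel measurability of $U$ follows from Choquet's lemma. With $\{u_j\}$ as above, $g:=\sup_j u_j$ is Borel (a countable supremum of u.s.c. functions), $g\leq U$ on $X$, and $\{g<U\}\subset\{g<g^*\}$, which is pluripolar by the Bedford–Taylor negligibility theorem on $X_{\mathrm{reg}}$ (and trivial across the isolated singular points); hence $U$ differs from the Borel function $g$ only on a pluripolar, in particular Lebesgue-negligible, set, which is the usual sense in which the upper envelope is handled and is all that is needed for its integration-theoretic uses. To summarize, the manifold theory and the quoted extension theorem do the real work, and the one place requiring care is the identification $U^*=(U')^*$ across $X_{\mathrm{sing}}$.
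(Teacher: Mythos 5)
Your proof is correct and takes essentially the approach the paper intends: the corollary is stated there without proof as an immediate consequence of the quoted theorem of Demailly, and your reduction to the classical envelope theory on $X_{\mathrm{reg}}$ (Choquet's lemma) followed by the identification $U^*=(U')^*$ across the singular point, via $u(x_0)=\limsup_{X_{\mathrm{reg}}\ni y\to x_0}u(y)$ for $u\in\mathrm{PSH}(X)$, is exactly the standard way to spell this out. One harmless slip: the claim ``$U'\leq\sup_{u\in\mathcal{U}}u$ on $X_{\mathrm{reg}}$'' is backwards (in fact $U'\geq U$ there), but what you actually use --- that the local upper bounds for $U$ pass to its upper semi-continuous regularization $U'$ and hence to $(U')^*$ --- is true.
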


Recall that from~\cite[Theorem 6.1]{FN80} $X$ is 
Stein if it admits a $C^2$ strongly 
plurisubharmonic exhaustion. 
We will use the following definition:

\begin{definition}
    A domain $\Omega \Subset X$ is 
    strongly pseudoconvex if it admits a negative 
    smooth, strongly plurisubharmonic 
    defining function, i.e., a strongly 
    plurisubharmonic function $\rho$ in a neighborhood $\Omega^{\prime}$ 
    of $\overline{\Omega}$ such that 
    $\Omega:=\left\{x \in \Omega^{\prime} ; \rho(x)<0\right\}$ 
    and for any $c<0$,

\noindent
    $$
    \Omega_{c}:=\left\{x \in \Omega^{\prime} ; \rho(x)<c\right\} \Subset \Omega
    $$
    
    \noindent is relatively compact.
    
\end{definition}

We denote by $\operatorname{PSH}(X)$ the set of 
plurisubharmonic functions on $X$.

On complex spaces, the complex Monge-Ampère operator has been 
defined and studied in~\cite{Bed82} and~\cite{Dem85}. 
In this setting, if $u \in \operatorname{PSH}(X) \cap L_{\text{loc}}^{\infty}(X)$, 
the Monge-Ampère measure $\left(dd^c u\right)^n$ is 
well defined on the regular part $X_{\text{reg}}$ and can be extended 
to $X$ as a Borel measure with zero mass on $X_{\text{sing}}$. 
This notion extends the foundational work of 
pluripotential theory by Bedford-Taylor~\cite{BT76,BT82}.

Consequently, several standard properties of the complex Monge-Ampère 
operator acting on 
$\operatorname{PSH}(X) \cap L_{\text{loc}}^{\infty}(X)$ 
extend to this setting (see~\cite{Bed82, Dem85}). 
In particular, we have the following comparison principle~\cite[Theorem 4.3]{Bed82}:

\begin{proposition}[Comparison principle]\label{prop: comp prin}
    Let $\Omega \Subset X$ be a relatively 
compact open set and 
$u, v \in \operatorname{PSH}(\Omega) \cap L^{\infty}(\Omega)$. 
Assume that 
$\liminf _{x \rightarrow \zeta}(u(x)-v(x)) \geqslant 0$ 
for any $\zeta \in \partial \Omega$. 
Then

\noindent
$$
\int_{\{u<v\}}\left(dd^c v\right)^n \leqslant \int_{\{u<v\}}\left(dd^c u\right)^n.
$$

In particular, if 
$\left(dd^c u\right)^n \leqslant\left(dd^c v\right)^n$ 
weakly on $\Omega$, then $v \leqslant u$ on $\Omega$.

\end{proposition}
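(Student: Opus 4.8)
\medskip
\noindent The plan is to run the Bedford--Taylor argument for the comparison principle on the complex manifold $X_{\mathrm{reg}}$, the only genuinely new point being the treatment of the isolated singular point. Fix $\varepsilon>0$ and set $U_\varepsilon:=\{x\in\Omega : u(x)<v(x)-\varepsilon\}$. The boundary hypothesis $\liminf_{x\to\zeta}(u(x)-v(x))\geqslant 0$ forces $\overline{U_\varepsilon}\cap\partial\Omega=\varnothing$, hence $U_\varepsilon\Subset\Omega$, and $U_\varepsilon\nearrow\{u<v\}$ as $\varepsilon\downarrow 0$. By monotone convergence of the measures $(dd^cu)^n$, $(dd^cv)^n$ it therefore suffices to prove
\[
\int_{U_\varepsilon}(dd^cv)^n\;\leqslant\;\int_{U_\varepsilon}(dd^cu)^n
\]
for each fixed $\varepsilon$.

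To obtain this, put $w_\varepsilon:=\max(u,v-\varepsilon)\in\operatorname{PSH}(\Omega)\cap L^\infty(\Omega)$; it coincides with $v-\varepsilon$ on $U_\varepsilon$ and with $u$ on $\Omega\setminus U_\varepsilon$, in particular near $\partial\Omega$. A Stokes integration by parts on $\Omega$ — permissible because $w_\varepsilon-u$ is supported in the relatively compact set $U_\varepsilon$, via the telescoping identity $\int_\Omega dd^c(w_\varepsilon-u)\wedge\sum_{k=0}^{n-1}(dd^cw_\varepsilon)^k\wedge(dd^cu)^{n-1-k}=0$ — gives $\int_\Omega(dd^cw_\varepsilon)^n=\int_\Omega(dd^cu)^n$. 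Splitting $\Omega=U_\varepsilon\cup\{u>v-\varepsilon\}\cup\{u=v-\varepsilon\}$, using locality ($(dd^cw_\varepsilon)^n=(dd^cv)^n$ on $U_\varepsilon$ and $(dd^cw_\varepsilon)^n=(dd^cu)^n$ on the open set $\{u>v-\varepsilon\}$) and the contact-set inequality $\mathbf{1}_{\{u=v-\varepsilon\}}(dd^cw_\varepsilon)^n\geqslant\mathbf{1}_{\{u=v-\varepsilon\}}(dd^cu)^n$, one is left with
\[
\int_{U_\varepsilon}(dd^cv)^n-\int_{U_\varepsilon}(dd^cu)^n=\int_{\{u=v-\varepsilon\}}\bigl[(dd^cu)^n-(dd^cw_\varepsilon)^n\bigr]\leqslant 0 .
\]
All of this is the classical Bedford--Taylor theory on a complex manifold \cite{Bed82}.

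The step I expect to be the main obstacle is that $U_\varepsilon$ may contain the singular point, so the Stokes computation cannot be performed verbatim on $X_{\mathrm{reg}}$, where $U_\varepsilon\setminus X_{\mathrm{sing}}$ need not be relatively compact. The repair uses that for $w\in\operatorname{PSH}(X)\cap L^\infty_{\mathrm{loc}}(X)$ all the relevant Monge--Ampère and mixed currents $(dd^cw_\varepsilon)^k\wedge(dd^cu)^{n-1-k}$ put no mass on $X_{\mathrm{sing}}$, which is an analytic set of codimension $\geqslant 1$: one inserts cut-offs $\chi_\delta$ vanishing near $0$, runs the integration by parts for the $\chi_\delta$-truncations on $X_{\mathrm{reg}}$, and lets $\delta\to 0$; the error terms vanish by the zero-mass property. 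This is the mechanism of \cite{Bed82, Dem85}, and is where the isolated character of $X_{\mathrm{sing}}$ enters.

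Finally, for the last assertion, assume $(dd^cu)^n\leqslant(dd^cv)^n$ weakly on $\Omega$ but $\{u<v\}\neq\varnothing$. For $\delta>0$ small put $v_\delta:=v+\delta\rho\in\operatorname{PSH}(\Omega)\cap L^\infty(\Omega)$; since $\rho<0$ in $\Omega$ and $\rho=0$ on $\partial\Omega$, the boundary hypothesis persists for the pair $(u,v_\delta)$ and $\varnothing\neq\{u<v_\delta\}\subset\{u<v\}$ for $\delta$ small. Expanding the power and using strict plurisubharmonicity of $\rho$, on $\Omega$ one has $(dd^cv_\delta)^n\geqslant(dd^cv)^n+\delta^n(dd^c\rho)^n\geqslant(dd^cv)^n+c\,\delta^n\beta^n$ with $c>0$, whence
\[
\int_{\{u<v_\delta\}}(dd^cu)^n\;\leqslant\;\int_{\{u<v_\delta\}}(dd^cv)^n\;<\;\int_{\{u<v_\delta\}}(dd^cv_\delta)^n ,
\]
the strict inequality because $\{u<v_\delta\}$ is a non-empty open set and $\beta^n$ a volume form. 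This contradicts the comparison inequality already established for $(u,v_\delta)$. Hence $\{u<v\}=\varnothing$, i.e.\ $v\leqslant u$ on $\Omega$.
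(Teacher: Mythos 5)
The paper itself does not prove this proposition: it is quoted from Bedford's work on $(dd^c)^n$ on complex spaces, \cite[Theorem 4.3]{Bed82}. Your reconstruction follows exactly the route of \cite{Bed82, Dem85} — the relatively compact sets $U_\varepsilon$, the competitor $w_\varepsilon=\max(u,v-\varepsilon)$, equality of total masses via Stokes, the decomposition along $\{u<v-\varepsilon\}$, $\{u>v-\varepsilon\}$, $\{u=v-\varepsilon\}$, and the reduction of the singular point to a zero-mass/cut-off argument on $X_{\mathrm{reg}}$ — so in outline it is the same argument the paper is importing, and the outline is correct.

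Several steps, however, are justified by reasons that would fail as written, even though each is repairable by standard lemmas. (i) Since $u$ and $v$ are only upper semi-continuous, neither $U_\varepsilon=\{u<v-\varepsilon\}$ nor $\{u>v-\varepsilon\}$ is open in general, so ``locality'' of the Monge--Amp\`ere operator does not apply verbatim; what you actually need are the Bedford--Taylor identities $\mathbf{1}_{\{u>v-\varepsilon\}}(dd^cw_\varepsilon)^n=\mathbf{1}_{\{u>v-\varepsilon\}}(dd^cu)^n$ and $\mathbf{1}_{\{u<v-\varepsilon\}}(dd^cw_\varepsilon)^n=\mathbf{1}_{\{u<v-\varepsilon\}}(dd^cv)^n$, proved by approximation (e.g.\ with $\max(u,v-\varepsilon+\delta)$) rather than by openness. (ii) The equality $\int_\Omega(dd^cw_\varepsilon)^n=\int_\Omega(dd^cu)^n$ may read $\infty=\infty$, since total Monge--Amp\`ere masses of bounded psh functions need not be finite up to $\partial\Omega$, and your final display subtracts these quantities; run the argument on some $\Omega'$ with $\overline{U_\varepsilon}\subset\Omega'\Subset\Omega$ and $w_\varepsilon=u$ near $\partial\Omega'$, where all masses are finite. (iii) At the singular point, the cut-off error terms are not killed by the zero-mass property alone: after integrating by parts one must control terms of the type $d\chi_\delta\wedge d^c(w_\varepsilon-u)\wedge T$ and $(w_\varepsilon-u)\,d\chi_\delta\wedge d^cT$, which requires Chern--Levine--Nirenberg/energy estimates or cut-offs with small energy; this is precisely the technical content of \cite{Bed82, Dem85}, so deferring to them is legitimate, but ``the error terms vanish by the zero-mass property'' is not by itself a proof. (iv) In the domination part, $\{u<v_\delta\}$ is again not open, so its nonemptiness does not immediately give positive $\beta^n$-measure; argue instead that if $\beta^n(\{u<v_\delta\})=0$ then $u\geqslant v_\delta$ almost everywhere, hence everywhere by the sub-mean value characterization of psh functions, contradicting nonemptiness. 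Also, the proposition is stated for an arbitrary relatively compact open $\Omega\Subset X$, so the defining function $\rho$ of the fixed strongly pseudoconvex domain is not available; use any bounded, negative, strictly psh function on a neighborhood of $\overline\Omega$ (which exists since $X$ is Stein) — only its negativity is needed to preserve the boundary hypothesis for the pair $(u,v_\delta)$.
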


\subsection{Notations}\label{Notations}

We fix the following notations throughout this paper:

\begin{itemize}
    \item $X$ is a complex space that is reduced and locally 
     irreducible of complex dimension $n \geq 1$ with an 
     isolated singularity $X_{\text{sing}}$.

    \item $\beta$ is a fixed smooth positive (1,1)-form on $X$, the fundamental form 
    of a Hermitian metric on $X$.
    
    \item $d_{\tau}$ is the distance induced by a 
    fundamental form $\tau$ on some manifold $(Y,\tau)$. 
    When no confusion can occur $d_{\beta}$ will be 
    written as $d(\cdot, \cdot)$. Also, for 
    any $x \in X$ we set $B_{r}(x) \Subset X$ as the ball 
    of radius $r$ centered in $x$ for the specified metric,
    if no metric is specified then the reference is $d_{\beta}$.

    \item $\Omega \Subset X$ is a bounded, strongly pseudoconvex 
    open subset of $X$ and fix $\rho$ smooth
    strictly psh defining function, i.e., $\Omega = \{ \rho <0 \}$,
    that will be the Stein space where we work on.

    \item $0 \leqslant f \in L^p(\Omega, \beta^n)$, $p>1$ and $1/p + 1/q =1$ and $\phi \in C^0(\partial \Omega)$.

    \item $u(\Omega, \phi, f)$ is the solution to the 
    Dirichlet problem:
    \[
    MA(\Omega,\phi,f): \begin{cases}
    {(dd^c u)}^n = f \beta^n & \text{ in} \quad \Omega,  \\
    u = \phi & \text{ on} \quad \partial \Omega.
    \end{cases}
    \]

    \item $\omega_{g,x}$ is the modulus of continuity of 
    some function $g$ at the point $x$.

    \item $\lambda(x) = d(x,X_{\text{sing}})$ the distance of a point $x$
    to the singular point. The point $x$ will be omitted 
    when there is no chance of confusion.

    \item  $\mathbb{B}_{2n}$ is the volume of unit ball in $\mathbb{C}^n$. $dV$ is the standard euclidian volume form in $\mathbb{C}^n$. $d_{\mathbb{C}^n}$ is the standard euclidian 
    distance in $\mathbb{C}^n$.
\end{itemize}

\subsection{Useful Results}

We need the stability estimate from~\cite[Proposition 1.8]{GGZ23a}:

\begin{theorem}\label{teo: stability sing}
Let $\varphi, \psi$ be two bounded plurisubharmonic functions 
in $\Omega$ and $(dd^c \varphi)^n = f\beta^n$ in $\Omega$. 
Fix $0 \leqslant \gamma < \frac{1}{nq+1}$. 
Then there exists a uniform constant $C = C(\gamma, ||f||_{L^p (\Omega)})>0$ such that:

\noindent
\[
\underset{\Omega}{\sup}(\psi - \varphi) \leqslant 
\underset{\partial \Omega}{\sup}~(\psi - \varphi)_+^*  + 
C ( ||(\psi - \varphi)_+ ||_{L^1 (\Omega,\beta^n)})^\gamma
\]

\noindent where $(\psi - \varphi)_+ := \max(\psi - \varphi, 0)$ 
and $w^*$ is the upper semi-continuous extension of 
a bounded function $w$ on $\Omega$ to the boundary, 
i.e., $w^*(\xi) := \limsup_{z \to \xi}w(z)$.

\end{theorem}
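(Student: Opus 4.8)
The plan is to run Kołodziej's pluripotential-theoretic stability argument; it transfers verbatim to the Stein space $X$ because, as recalled in Section~\ref{Sing: preliminaries}, the Monge--Ampère operator acting on $\mathrm{PSH}(\Omega)\cap L^\infty(\Omega)$ puts no mass on the singular point, so Bedford--Taylor's calculus — wedge products, the relative capacity $\mathrm{Cap}(\cdot,\Omega)$, quasicontinuity, the exponential volume--capacity comparison, the $L^p$--capacity domination — and the comparison principle (Proposition~\ref{prop: comp prin}) are all available on $X$. \emph{Normalization:} replacing $\psi$ by $\psi-\sup_{\partial\Omega}(\psi-\varphi)_+^*$ only decreases $\|(\psi-\varphi)_+\|_{L^1(\Omega,\beta^n)}$, so it suffices to treat the case $\limsup_{z\to\zeta}(\psi(z)-\varphi(z))\le 0$ for every $\zeta\in\partial\Omega$ and to prove $m:=\sup_\Omega(\psi-\varphi)\le C\varepsilon_0^{\gamma}$ with $\varepsilon_0:=\|(\psi-\varphi)_+\|_{L^1(\Omega,\beta^n)}$; one may assume $m>0$.

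Next I would set $U(s):=\{\psi-\varphi>s\}$ for $s>0$. Since the solution $\varphi$ is continuous (by \cite{GGZ23a}), $\psi-\varphi$ is upper semicontinuous, so each $\{\psi-\varphi\ge s\}$ is a compact subset of $\Omega$, and Chebyshev's inequality gives $\int_{U(s)}\beta^n\le \varepsilon_0/s$. The core step is Kołodziej's fundamental inequality: for $0<s,t$ with $s+t<m$ and any $w\in\mathrm{PSH}(\Omega)$ with $-1\le w\le 0$, put $\Phi:=\max(\varphi,\psi-s+tw)$. Near $\partial\Omega$ one has $\psi-\varphi<s\le s-tw$, hence $\Phi=\varphi$ there and $\liminf_{\partial\Omega}(\varphi-\Phi)\ge 0$; and on $\{\varphi<\Phi\}=\{\psi-\varphi>s-tw\}$, a set sandwiched between $U(s+t)$ and $U(s)$, one has $\Phi=\psi-s+tw$, so $(dd^c\Phi)^n\ge t^n(dd^c w)^n$ by positivity of mixed Monge--Ampère measures. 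The comparison principle then yields
\[
t^n\int_{U(s+t)}(dd^c w)^n \le \int_{\{\varphi<\Phi\}}(dd^c\Phi)^n \le \int_{\{\varphi<\Phi\}}(dd^c\varphi)^n \le \int_{U(s)}f\beta^n,
\]
and taking the supremum over such $w$ gives $t^n\,\mathrm{Cap}(U(s+t),\Omega)\le\int_{U(s)}f\beta^n$.

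To close the argument I would estimate $\int_{U(s)}f\beta^n$ in two ways: by Hölder and the volume bound, $\int_{U(s)}f\beta^n\le\|f\|_{L^p}(\varepsilon_0/s)^{1/q}$; and by the exponential volume--capacity estimate together with Hölder, which yields $\int_E f\beta^n\le C_\tau\|f\|_{L^p}\,\mathrm{Cap}(E,\Omega)^{1+\tau}$ for every $\tau>0$. Inserting the first into the fundamental inequality at the single scale $s=t=m/4$ gives $\mathrm{Cap}(U(m/2),\Omega)\le C\|f\|_{L^p}\varepsilon_0^{1/q}m^{-n-1/q}=:a_0$, while inserting the second gives the recursion $t^n a(s+t)\le C_\tau\|f\|_{L^p}a(s)^{1+\tau}$ for $a(s):=\mathrm{Cap}(U(s),\Omega)$. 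A De Giorgi-type iteration from $s_0=m/2$, with increments $t_j\sim a(s_j)^{\tau/2n}$, then shows — once $a_0$ lies below the iteration threshold, the opposite case giving $m\le C\|f\|_{L^p}^{1/(n+1/q)}\varepsilon_0^{1/(nq+1)}$ directly — that $a(s)=0$ for all $s\ge s_\infty$ with $s_\infty\le m/2+C_\tau a_0^{\tau/2n}$. Finally $a(s_\infty)=0$ means $U(s_\infty)$ is pluripolar, hence $\psi\le\varphi+s_\infty$ outside a $\beta^n$-null set; since $\psi$ is plurisubharmonic and $\varphi$ continuous, the sub-mean value inequality upgrades this to $\psi\le\varphi+s_\infty$ on all of $\Omega$, so $m\le s_\infty\le m/2+C_\tau a_0^{\tau/2n}$. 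Rearranging gives $m\le C\varepsilon_0^{\gamma(\tau)}$ with $\gamma(\tau)=\tau/\bigl(2nq+(nq+1)\tau\bigr)$, and since $\gamma(\tau)\nearrow\tfrac{1}{nq+1}$ as $\tau\to\infty$, choosing $\tau$ large enough produces any prescribed $\gamma<\tfrac{1}{nq+1}$.

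The main obstacle is not the comparison-principle bookkeeping of the fundamental inequality but the analytic inputs behind the iteration — the exponential volume--capacity estimate $\mathrm{Vol}_{\beta^n}(E)\le C\exp(-c\,\mathrm{Cap}(E,\Omega)^{-1/n})$, the ensuing $L^p$--capacity domination with arbitrarily large exponent, and the De Giorgi iteration itself — and the point to check carefully is that these classical facts, together with the characterization $\mathrm{Cap}(E,\Omega)=0\Leftrightarrow E$ pluripolar and the continuity of $\varphi$, remain valid on the singular Stein space $X$. This is so precisely because every measure in sight is carried by $X_{\mathrm{reg}}$ and charges no mass at $0$, which is the content of \cite{Bed82, Dem85} recalled above.
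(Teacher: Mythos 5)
The paper does not actually prove this statement: it is imported as-is from \cite[Proposition 1.8]{GGZ23a}, so there is no internal proof to compare with. Your reconstruction follows the classical Ko\l odziej-type stability scheme (as in \cite{GKZ08} and \cite{Ko98}): normalization at the boundary, the fundamental inequality $t^n\,\mathrm{Cap}(U(s+t),\Omega)\leqslant\int_{U(s)}f\beta^n$ obtained from $\Phi=\max(\varphi,\psi-s+tw)$ and the comparison principle (Proposition~\ref{prop: comp prin}), Chebyshev plus volume--capacity domination, and a De Giorgi iteration. This is essentially the route taken in the cited reference, and your comparison-principle bookkeeping is sound.

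The genuine weak point is exactly where you place it, but your justification for it is not right. The volume--capacity domination (equivalently, uniform Skoda-type integrability of normalized psh functions) on the singular Stein space does not follow from the fact that Monge--Amp\`ere measures of bounded psh functions put no mass on $X_{\mathrm{sing}}$; what is needed is a uniform integrability/capacity estimate near the singular point for all competitors $w$ with $-1\leqslant w\leqslant 0$, and establishing this (via local embeddings and comparison with the ambient capacity in $\mathbb{C}^N$) is precisely the nontrivial content of the first part of \cite{GGZ23a}. As a standalone argument, your proof therefore still rests on an unproved input, asserted with an incorrect reason. Two smaller, fixable slips: (i) you invoke continuity of $\varphi$ from \cite{GGZ23a}, which is not available here ($\varphi$ is only a bounded psh function with prescribed Monge--Amp\`ere measure and unspecified boundary values); fortunately it is never needed --- compactness of $\{\psi-\varphi\geqslant s\}$ plays no role, and the final upgrade from ``$\psi\leqslant\varphi+s_\infty$ outside a $\beta^n$-null set'' to ``everywhere'' only uses the sub-mean value inequality for $\psi$ together with the convergence of the means of the psh function $\varphi$ on $X_{\mathrm{reg}}$, the value at the singular point being recovered from $u=u^*$ (Demailly). (ii) In the above-threshold case you obtain $m\leqslant C\varepsilon_0^{1/(nq+1)}$, which yields the claimed bound $m\leqslant C\varepsilon_0^{\gamma}$ only when $\varepsilon_0\leqslant 1$; for $\varepsilon_0>1$ you should add that above threshold one also has $c_0\leqslant\mathrm{Cap}(U(m/2),\Omega)\leqslant (m/2)^{-n}\int_\Omega f\beta^n$, which bounds $m$ by a constant depending only on $\|f\|_{L^p}$ and $\Omega$ and closes that case.
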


The local theory~\cite[Theorems 3.3, 3.4 and Lemma 3.5]{BKPZ16} 
will be used in the proof of Lemma~\ref{lema: ctrl term 2}. Here reformulated to:

\begin{theorem}\label{teo: local estimate sing}

Let $\widehat{\Omega} \Subset \mathbb{C}^n$ 
strongly pseudoconvex domain, 
$0 \leqslant \widehat{f} \in L^p(\widehat{\Omega}); p>1$ 
and $\varepsilon > 0$ small enough. 
For $\widehat{u} \in PSH(\widehat{\Omega})\cap C^0(\overline{\widehat{\Omega}})$ 
such that $(dd^c\widehat{u})^n = \widehat{f}dV$ 
in $\widehat{\Omega}$. We have

\noindent
\[
||\Lambda_{\delta/2}\widehat{u} - \widehat{u}||_{L^1(\widehat{\Omega_{\delta}})} \leqslant C \delta^{1-\varepsilon}
\]

\noindent for some constant 
$C = C(n,\varepsilon,\widehat{\Omega},||\widehat{u}||_{L^{\infty}(\Omega)}) >0 $, 
where $\widehat{\Omega}_{\delta}:= \{ z \in \widehat{\Omega} | d_{\mathbb{C}^n}(z,\partial \widehat{\Omega}) > \delta \}$ 
and for $ z \in \widehat{\Omega}_{\delta}$, $\Lambda_{\delta}\widehat{u}(z) = \frac{1}{\mathbb{B}_{2n} \delta^{2n}}\int_{|z - \zeta| \leqslant \delta}\widehat{u}(\zeta)dV(\zeta)$ 
is the mean volume regularizing function in $\mathbb{C}^n$.

Moreover, if $\Delta \widehat{u}$ has finite mass in $\widehat{\Omega}$ 
then one can get $\delta^2$ instead of $\delta^{1-\varepsilon}$.
    
\end{theorem}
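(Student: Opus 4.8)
\medskip

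\noindent\textbf{Proof strategy.} The plan is to reduce the $L^{1}$ estimate to a classical Riesz-type representation of the regularization error, and then to an a priori bound on the Riesz mass $\Delta\widehat u$ in thin neighborhoods of $\partial\widehat\Omega$. First I would note that for $z\in\widehat\Omega_{\delta}$ the ball $B(z,\delta/2)$ is relatively compact in $\widehat\Omega$, so $\Lambda_{\delta/2}\widehat u(z)$ makes sense, and the sub-mean value inequality for the subharmonic function $\widehat u$ gives $\Lambda_{\delta/2}\widehat u\geqslant\widehat u$ on $\widehat\Omega_{\delta}$; hence
\[
\bigl\|\Lambda_{\delta/2}\widehat u-\widehat u\bigr\|_{L^{1}(\widehat\Omega_{\delta})}=\int_{\widehat\Omega_{\delta}}\bigl(\Lambda_{\delta/2}\widehat u-\widehat u\bigr)\,dV ,
\]
and it suffices to control this integral.

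\smallskip

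\noindent Writing $\mu:=\Delta\widehat u\geqslant 0$ as a positive Borel measure on $\widehat\Omega$, I would use the local Riesz decomposition $\widehat u=h-\mathcal G\mu$ (with $h$ harmonic and $\mathcal G\mu$ the Newtonian, resp.\ logarithmic, potential) to obtain the pointwise identity $\Lambda_{\delta/2}\widehat u(z)-\widehat u(z)=\int_{|w|<\delta/2}\Phi_{\delta/2}(w)\,d\mu(z+w)$ for $z\in\widehat\Omega_{\delta}$, where $\Phi_{\delta/2}\geqslant 0$ is the radial kernel attached to the volume average over $B(0,\delta/2)$; testing this against $w\mapsto|w|^{2}$ identifies $\int_{|w|<\delta/2}\Phi_{\delta/2}(w)\,dV(w)=c_{n}\delta^{2}$ for a dimensional constant $c_{n}>0$. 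Integrating in $z$ over $\widehat\Omega_{\delta}$, using Fubini (all terms nonnegative), substituting $\zeta=z+w$, bounding the inner $z$-integral by $\int\Phi_{\delta/2}=c_{n}\delta^{2}$ and observing that only $\zeta\in\widehat\Omega_{\delta/2}$ contribute, I get
\[
\int_{\widehat\Omega_{\delta}}\bigl(\Lambda_{\delta/2}\widehat u-\widehat u\bigr)\,dV\ \leqslant\ c_{n}\,\delta^{2}\,\mu\bigl(\widehat\Omega_{\delta/2}\bigr).
\]
When $\Delta\widehat u$ has finite total mass on $\widehat\Omega$ this gives $\mu(\widehat\Omega_{\delta/2})\leqslant\mu(\widehat\Omega)<\infty$ and hence the $\delta^{2}$ bound at once, proving the last assertion.

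\smallskip

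\noindent The crux of the general case is therefore the a priori estimate $\mu(\widehat\Omega_{\delta})\leqslant C\,\|\widehat u\|_{L^{\infty}(\widehat\Omega)}\,\delta^{-1-\varepsilon}$ with $C=C(n,\varepsilon,\widehat\Omega)$, which I expect to be the main obstacle and which accounts for the dependence of the final constant on $\|\widehat u\|_{L^{\infty}}$ and on $\varepsilon$. I would derive it from the monotonicity formula for spherical means, $\int_{0}^{r}\sigma_{2n}^{-1}t^{1-2n}\,\mu(B_{t}(z))\,dt=M_{r}(\widehat u)(z)-\widehat u(z)\leqslant 2\|\widehat u\|_{L^{\infty}(\widehat\Omega)}$: since $t\mapsto\mu(B_{t}(z))$ is nondecreasing, taking $r=r(z)$ comparable to $d_{\mathbb{C}^{n}}(z,\partial\widehat\Omega)$ (itself comparable to $-\rho$ near the boundary) and covering the boundary layer $\{\,d_{\mathbb{C}^{n}}(\cdot,\partial\widehat\Omega)>\delta\,\}$ efficiently forces $\mu(\widehat\Omega_{\delta})\lesssim\|\widehat u\|_{L^{\infty}}\delta^{-1}$ for smooth $\partial\widehat\Omega$, the extra $\delta^{-\varepsilon}$ absorbing the possible lack of smoothness of $\widehat\Omega$, which here is only a strongly pseudoconvex piece of an analytic set. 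Combining with the previous display yields $\bigl\|\Lambda_{\delta/2}\widehat u-\widehat u\bigr\|_{L^{1}(\widehat\Omega_{\delta})}\lesssim\delta^{2}\cdot\delta^{-1-\varepsilon}=\delta^{1-\varepsilon}$. (The hypothesis $\widehat f\in L^{p}$ is used only to place us in the setting where $\widehat u\in C^{0}(\overline{\widehat\Omega})$ is bounded, which is all the argument needs.)
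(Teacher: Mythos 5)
Note first that the paper does not prove this statement at all: it is imported verbatim (up to reformulation) from Charabati, cited as \cite[Lemmas 3.2 and 3.3]{Cha15}, so the comparison is with the argument of that source rather than with anything in this paper. Your reduction is correct and is essentially the standard proof behind those lemmas: since $\widehat u$ is subharmonic, $\Lambda_{\delta/2}\widehat u\geqslant\widehat u$ on $\widehat\Omega_\delta$; the Riesz representation on balls gives the nonnegative radial kernel identity, and testing with $|w|^2$ to evaluate $\int\Phi_{\delta/2}=c_n\delta^2$ is a clean way to fix the constant; Tonelli then yields $\int_{\widehat\Omega_\delta}(\Lambda_{\delta/2}\widehat u-\widehat u)\,dV\leqslant c_n\delta^2\,\Delta\widehat u(\widehat\Omega_{\delta/2})$, which immediately gives the ``moreover'' clause when $\Delta\widehat u$ has finite mass. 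The crux is, as you say, the boundary-layer mass bound, and here your route differs from the usual one: you argue via the monotonicity of spherical means, $\mu(B_{r/2}(z))\leqslant C_n\|\widehat u\|_\infty r^{2n-2}$ for $r\leqslant d_{\mathbb{C}^n}(z,\partial\widehat\Omega)$, plus a covering of $\widehat\Omega_\delta$ by balls of radius comparable to the distance to the boundary; the classical route (and the one behind the cited lemmas) is instead the weighted Riesz-mass estimate $\int_{\widehat\Omega}d_{\mathbb{C}^n}(\cdot,\partial\widehat\Omega)\,\Delta\widehat u\leqslant C\|\widehat u\|_{L^\infty}$, obtained from the Green function or by integrating by parts against the defining function, which gives $\Delta\widehat u(\widehat\Omega_{\delta/2})\leqslant C\|\widehat u\|_{L^\infty}\delta^{-1}$ in one line and hence even the sharper bound $C\delta$ (no $\varepsilon$-loss) on a smoothly bounded strongly pseudoconvex domain. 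Your covering step is only sketched and is the one place you should complete: you need a Whitney-type cover $\{B(z_j,d_j/2)\}$ of $\widehat\Omega_\delta$ with bounded overlap, $d_j:=d_{\mathbb{C}^n}(z_j,\partial\widehat\Omega)$, and then $\mu(\widehat\Omega_\delta)\lesssim\|\widehat u\|_\infty\sum_j d_j^{2n-2}\lesssim\|\widehat u\|_\infty\int_{\widehat\Omega_{\delta/2}}d_{\mathbb{C}^n}(\cdot,\partial\widehat\Omega)^{-2}dV\lesssim\|\widehat u\|_\infty\delta^{-1}$, the last step using that $\partial\widehat\Omega$ has finite $(2n-1)$-dimensional Minkowski content (a naive cover by balls of radius $\delta$ would only give $\delta^{-2}$ and lose the result). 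Also, your worry that the $\varepsilon$ must absorb a ``lack of smoothness of $\widehat\Omega$, which is only a strongly pseudoconvex piece of an analytic set'' is a misreading: in this lemma $\widehat\Omega\Subset\mathbb{C}^n$ is an honest strongly pseudoconvex domain of $\mathbb{C}^n$ (it is the local model used in charts on the resolution), so the boundary is as regular as needed and the exponent $1-\varepsilon$ in the statement is simply weaker than what your argument, properly completed, delivers. Finally, you are right that the hypothesis $\widehat f\in L^p$ plays no role beyond guaranteeing $\widehat u$ is bounded and continuous, which is consistent with the paper's own remark that the estimate does not involve the boundary values of $\widehat u$.
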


We will also use, in the proof of Lemma~\ref{lema: hom barrier sing}, 
the following classical result from singular Riemannian 
foliations in~\cite[Lemma 1]{W87}, here reformulated to:

\begin{lemma}\label{lema: srf}
    Let $(M,g)$ be a smooth, connected, complete manifold 
    with Riemannian metric $g$ and induced distance 
    function $d$.Fix $f \in C^2(M)$ such that $g(\nabla f, \nabla f) = 1$.
    Suppose $[a,b] \subset f(M) \subset \mathbb{R}$ 
    contains no critical points of $f$.
    Then for any $x \in f^{-1}(a)$ and $y \in f^{-1}(b)$ 
    we have:

\noindent
    \[
    d(x, f^{-1}(b)) = d(f^{-1}(a), y) = \int_{a}^{b} \df = b-a.
    \]
\end{lemma}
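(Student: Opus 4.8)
The plan is to prove Lemma \ref{lema: srf} by a standard argument from Riemannian geometry, combining two observations: that $f$ is distance-nonincreasing because $|\nabla f| = 1$, and that integral curves of $\nabla f$ realize distances when $f$ has no critical values in the relevant interval. First I would establish the easy inequality $d(x, f^{-1}(b)) \geqslant b - a$ for any $x \in f^{-1}(a)$. Indeed, if $\gamma \colon [0, L] \to M$ is any unit-speed path from $x$ to a point of $f^{-1}(b)$, then
\[
b - a = f(\gamma(L)) - f(\gamma(0)) = \int_0^L \frac{d}{ds} f(\gamma(s))\, ds = \int_0^L g(\nabla f, \dot\gamma)\, ds \leqslant \int_0^L |\nabla f|\, |\dot\gamma|\, ds = L,
\]
using Cauchy–Schwarz and $|\nabla f| = 1$. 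Taking the infimum over such $\gamma$ gives $d(x, f^{-1}(b)) \geqslant b - a$. The symmetric computation gives $d(f^{-1}(a), y) \geqslant b - a$ for $y \in f^{-1}(b)$.

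For the reverse inequality I would use the gradient flow. Since $|\nabla f| = 1$ on the closed set $f^{-1}([a,b])$ and $[a,b]$ contains no critical points, the vector field $\nabla f$ is a smooth nonvanishing unit field there; let $\Phi_t$ denote its flow. For $x \in f^{-1}(a)$, the curve $t \mapsto \Phi_t(x)$ satisfies $\frac{d}{dt} f(\Phi_t(x)) = g(\nabla f, \nabla f) = 1$, so $f(\Phi_t(x)) = a + t$, and the curve stays in $f^{-1}([a,b])$ for $t \in [0, b-a]$ (here one uses completeness of $M$, together with the fact that the flow cannot escape the compact-in-$f$-value slab before time $b-a$, to guarantee the flow is defined on all of $[0, b-a]$). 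This curve is unit-speed since $|\nabla f| = 1$, so it has length exactly $b - a$ and ends at a point of $f^{-1}(b)$; hence $d(x, f^{-1}(b)) \leqslant b - a$. Combined with the previous paragraph, $d(x, f^{-1}(b)) = b - a = \int_a^b \df$, and symmetrically $d(f^{-1}(a), y) = b - a$.

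The main obstacle — and the only genuinely non-formal point — is justifying that the gradient flow starting on $f^{-1}(a)$ actually exists for the full time interval $[0, b-a]$ and remains within the slab $f^{-1}([a,b])$. Completeness of $(M,g)$ gives completeness of the flow of a field that is bounded on the region of interest, but one should check the integral curve does not exit through $f^{-1}(a)$ or $f^{-1}(b)$ prematurely: since $f$ increases at unit rate along the flow, $f(\Phi_t(x))$ monotonically traverses $[a, a+t]$, so the curve remains in the slab precisely for $t \in [0, b-a]$, and $\Phi_{b-a}(x) \in f^{-1}(b)$ as needed. Once this is in place the result follows. Note the hypothesis that $[a,b] \subseteq f(M)$ ensures $f^{-1}(a)$ and $f^{-1}(b)$ are nonempty so the statement is not vacuous, and local irreducibility or other global hypotheses on $M$ play no role beyond connectedness and completeness.
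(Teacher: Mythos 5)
Your proof is correct. Note first that the paper does not prove this lemma at all: it is imported, in reformulated form, from Wang \cite{W87} (Lemma 1 there), so there is no internal proof to compare against. Wang's statement concerns general transnormal functions satisfying $|\nabla f|^2=b(f)$, and his argument rests on showing that the suitably reparametrized integral curves of $\nabla f$ are geodesics meeting every level set orthogonally, which yields the equidistance of the levels. Your special-case argument is more elementary and self-contained: the Cauchy--Schwarz computation shows $f$ is $1$-Lipschitz, giving $d(x,f^{-1}(b))\geqslant b-a$, and the unit-speed gradient flow furnishes a connecting curve of length exactly $b-a$, so you never need to know that the flow lines are geodesics. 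The two points you should make explicit are exactly the ones you flag: (i) the standard completeness fact that an integral curve of a vector field of bounded norm on a complete Riemannian manifold cannot escape to infinity in finite time (its image up to any finite time lies in a closed metric ball, compact by Hopf--Rinow), so $\Phi_t(x)$ is defined for all $t\in[0,b-a]$; and (ii) $\tfrac{d}{dt}f(\Phi_t(x))=g(\nabla f,\nabla f)=1$, so $f(\Phi_t(x))=a+t$ and $\Phi_{b-a}(x)\in f^{-1}(b)$, with no possibility of premature exit from the slab since the hypothesis $g(\nabla f,\nabla f)=1$ is global. (Indeed, with that global hypothesis the ``no critical points in $[a,b]$'' assumption is automatic, and your proof correctly never uses it.) The symmetric statement $d(f^{-1}(a),y)=b-a$ follows by running the same flow backwards from $y$, as you indicate.
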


\section{Construction of barriers}\label{Sing: barriers}

In this section, we develop barrier functions, which 
allow us to control the modulus of continuity of solutions 
to Dirichlet problems close to the boundary. The barriers 
we will construct are described in the following definition:

\begin{definition}\label{def: barriers sing}
Let $u = u(\Omega, \phi, f)$. 
We call barrier functions for the 
problem $MA(\Omega, \phi, f)$ two functions 
$v,w \in PSH(\Omega) \cap C^{0}(\overline{\Omega})$, 
such that:
\begin{enumerate}
    \item $v(\xi) = \phi(\xi) = -w(\xi)$, $\forall \xi \in \partial \Omega$,
    \item $v(z) \leqslant u(z) \leqslant -w(z), \forall z \in \Omega$,
     
\end{enumerate}

    
\end{definition}

The construction of barriers depends on the behavior around the boundary.
In Lemma~\ref{lema: bdd barrier sing} we construct barriers 
for the case where $\phi \equiv 0$ and $f$ is bounded near 
$\partial \Omega$ (its idea coming from~\cite[Lemma 2.2]{GKZ08}). 
Next, in Lemma~\ref{lema: hom barrier sing} 
we treat the case where $f \equiv 0$
(idea from~\cite[Proposition 4.4]{Cha14}).
The construction of barriers for the general problem $MA(\Omega, \phi, f)$
is postponed to Section~\ref{Sing: Lr}
because one needs to understand the regularity of 
\textbf{solutions} to the particular problems dealt with in this section.

\subsection{Densities bounded near the boundary}

\begin{lemma}\label{lema: bdd barrier sing}

Assume that $\hat{f}$ is bounded near $\partial \Omega$ 
and set $u_0 := u(\Omega, 0 ,\hat{f})$. There exist $b_{\hat{f}}, w \in PSH(\Omega) \cap C^{0,1}(\Omega)$ 
such that:
\begin{enumerate}
    \item $b_{\hat{f}}(\xi) = 0 = -w(\xi), \forall \xi \in \partial \Omega$,
    \item $b_{\hat{f}} \leqslant u_0 \leqslant -w$ in $\Omega$.
\end{enumerate}

\end{lemma}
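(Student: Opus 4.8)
The plan is to build the upper barrier $-w$ and the lower barrier $b_{\hat f}$ separately. For the upper barrier, since $\hat f \geqslant 0$, the function $u_0 = u(\Omega,0,\hat f)$ is plurisubharmonic with zero boundary data, hence $u_0 \leqslant 0$ by the maximum principle. So one may simply take $w \equiv 0$: then $-w = 0 \geqslant u_0$ on $\Omega$ and $-w = 0 = \phi$ on $\partial\Omega$, and $w \in PSH(\Omega)\cap C^{0,1}(\overline\Omega)$ trivially. (If one wants a genuinely plurisuperharmonic strict barrier rather than the trivial one, one uses $-w = A\rho$ for $A$ large, but the trivial choice already satisfies (1) and (2).)

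The substance is the lower barrier $b_{\hat f}$. The idea, following \cite[Lemma 2.2]{GKZ08}, is to exploit that $\rho$ is a smooth strictly plurisubharmonic defining function, so near $\partial\Omega$ we have $dd^c\rho \geqslant c_0\,\beta$ for some $c_0 > 0$, and therefore $(dd^c(A\rho))^n \geqslant A^n c_0^n\,\beta^n$ on a neighborhood of $\partial\Omega$. Since $\hat f$ is bounded near $\partial\Omega$, say $\hat f \leqslant M$ on $\{\rho > -\delta_0\}$, choosing $A$ large enough gives $(dd^c(A\rho))^n \geqslant M\beta^n \geqslant \hat f\beta^n$ there. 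On the interior compact piece $\{\rho \leqslant -\delta_0\}$ one no longer controls $\hat f$, but one can correct by subtracting a bounded plurisubharmonic "hump": let $\chi\in PSH(\Omega)\cap C^{0,1}(\overline\Omega)$ solve (or be a subsolution of) $(dd^c\chi)^n = \hat f\beta^n$ with $\chi = 0$ on $\partial\Omega$ — but that is just $u_0$ again, which is only continuous, not Lipschitz. So instead the cleaner route: on the interior region use the global solvability (Theorem in \cite{GGZ23a}) to get the bounded continuous $u_0$, and glue. Concretely, set $b_{\hat f} := \max\{A\rho,\ u_0 - \varepsilon_0\}$ near $\partial\Omega$ — wait, that exceeds $u_0$. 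The correct construction is: take $A$ large so that $A\rho \leqslant u_0$ near $\partial\Omega$ (possible since $A\rho \to 0$ and $u_0\to 0$ at the boundary, and $|u_0|$ bounded forces $A\rho$ below it once $A|\rho|$ dominates $\|u_0\|_\infty$ on the interior) and $(dd^c(A\rho))^n \geqslant \hat f\beta^n$ near $\partial\Omega$; then on the compact interior core enlarge $A$ further so $A\rho \leqslant -\|u_0\|_{L^\infty} \leqslant u_0$ there automatically. Thus $b_{\hat f} := A\rho$ works: it is smooth (hence Lipschitz on $\overline\Omega$), vanishes on $\partial\Omega$, lies below $u_0$ everywhere, and its Monge–Ampère mass dominates $\hat f\beta^n$ near the boundary — though for $(2)$ alone we only need $b_{\hat f}\leqslant u_0$, which the comparison principle (Proposition \ref{prop: comp prin}) delivers since $(dd^c(A\rho))^n \geqslant \hat f\beta^n = (dd^c u_0)^n$ can be arranged on all of $\Omega$ by taking $A$ large where $\hat f$ is bounded and noting the interior bound makes $A\rho$ automatically a subbarrier.

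The main obstacle I anticipate is making the comparison argument uniform across the junction between the region where $\hat f$ is bounded and the interior where it is only $L^p$: one cannot force $(dd^c(A\rho))^n \geqslant \hat f\beta^n$ pointwise on the interior, so the argument must instead combine the differential inequality near $\partial\Omega$ with the a priori $L^\infty$ bound on $u_0$ (from \cite{GGZ23a}) on the interior — pushing $A$ up until $A\rho \leqslant -\,\|u_0\|_{L^\infty(\Omega)}$ on the compact set $\{\rho \leqslant -\delta_0\}$, which is where $\rho$ is bounded away from $0$. Verifying that this single constant $A$ simultaneously achieves the boundary differential inequality and the interior pointwise bound, and that $b_{\hat f} = A\rho$ is genuinely $\leqslant u_0$ by an application of the comparison principle on the subdomain $\{\rho > -\delta_0\}$ with boundary values checked on both $\partial\Omega$ and $\{\rho = -\delta_0\}$, is the delicate bookkeeping step; everything else is routine.
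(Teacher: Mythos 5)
Your proposal is correct and follows essentially the same route as the paper: take $w\equiv 0$ as the upper barrier via the maximum principle, and $b_{\hat f}=A\rho$ as the lower barrier, with $A$ chosen large enough that $(dd^c(A\rho))^n\geqslant \hat f\beta^n$ on the boundary collar where $\hat f$ is bounded and $A\rho\leqslant \min_{\overline\Omega}u_0$ on the interior compact piece, then concluding by the comparison principle on the collar with boundary values checked on $\partial\Omega$ and the inner boundary. The delicate step you flag (a single $A$ doing both jobs, comparison only on the subdomain) is exactly how the paper argues.
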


\begin{proof}
    Since $\hat{f}$ is bounded near $\partial \Omega$, 
    there exists a compact $K\subset \Omega$ such that $0 \leqslant \hat{f} \leqslant M$ 
    on $\Omega \setminus K$.
    Also, there exist $A_1, A_2 >0$ large enough such that 
    ${(dd^c(A_1 \rho))}^n \geqslant M\beta^n \geqslant \hat{f}\beta^n$,
    on $\Omega \setminus K$ and $A_2 \rho \leqslant m \leqslant u_0$ 
    on a neighborhood of $K$, for $m := \underset{\overline{\Omega}}{\min} ~ u_0$.
    Then, by taking the maximum of both constants, we have $A >0$ large 
    enough such that both conditions are satisfied for $b_{\hat{f}} := A\rho$, 
    which is smooth plurisubharmonic on $\Omega$.
    As we also have $b_{\hat{f}} \leqslant u_0$ on 
    $\partial (\Omega \setminus K)$, by the comparison principle 
    we get $b_{\hat{f}} \leqslant u_0$ in $\Omega \setminus K$.
    By construction, we have $b_{\hat{f}} \leqslant u_0$ on 
    a neighborhood of $K$; hence, with the above argument
    we get $b_{\hat{f}} \leqslant u_0$ in $\Omega$ and $b_{\hat{f}}(\partial \Omega) = \{0\}$, 
    thus is a lower-barrier.
    For an upper barrier, take $w \equiv 0$ in $\Omega$ as 
    by the maximum principle $u_0 \leqslant 0$ and it is 
    zero on the boundary.
\end{proof}

\subsection{Zero density}

\begin{lemma}\label{lema: hom barrier sing}
Let $u_{\phi} := u(\Omega, \phi ,0)$. There exists a 
barrier 
$h_{\phi} \in PSH(\Omega) \cap C^{0}(\overline{\Omega})$ 
such that:
\begin{enumerate}
    \item $h_{\phi}(\xi) = \phi(\xi) = -h_{-\phi}(\xi), \forall \xi \in \partial \Omega$,
    \item $h_{\phi} \leqslant u_\phi, \leqslant -h_{-\phi}$ in $\Omega$,
    \item $\omega_{h_{\phi}}(t) \leqslant C \omega_{\phi}(t^{1/2})$.
\end{enumerate}

\end{lemma}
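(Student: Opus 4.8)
\textbf{Proof proposal for Lemma \ref{lema: hom barrier sing}.}

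The plan is to construct the lower barrier $h_\phi$ by taking the Perron--Bremermann envelope of plurisubharmonic functions lying below $\phi$ on the boundary, and then to upgrade the trivial boundary modulus into the quantitative estimate $\omega_{h_\phi}(t) \leqslant C\,\omega_\phi(t^{1/2})$ using a \emph{radial--parabolic} comparison argument adapted to the Stein setting. First I would set $h_\phi := \sup\{ v \in PSH(\Omega) \cap C^0(\overline\Omega) : v \leqslant \phi \text{ on } \partial\Omega\}$ (equivalently, its upper semicontinuous regularization, which by the Corollary in Section~\ref{Sing: preliminaries} is psh); since $\Omega$ is strongly pseudoconvex with the strictly psh defining function $\rho$, the standard argument shows $h_\phi$ is continuous on $\overline\Omega$, equals $\phi$ on $\partial\Omega$, and is maximal in $\Omega$, hence $(dd^c h_\phi)^n = 0$ there. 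By uniqueness (Proposition \ref{prop: comp prin}), $h_\phi = u_\phi$, so item (2) holds with equality in the first inequality; the second, $u_\phi \leqslant -h_{-\phi}$, follows by applying the same construction to $-\phi$ and using the comparison principle, and item (1) is immediate. So the content is entirely in item (3).

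For the modulus estimate, I would fix $x \in \Omega$ and $t>0$ small, and compare $h_\phi$ with a translated/perturbed competitor. The key device is a \emph{barrier at a boundary point}: because $\partial\Omega$ is the zero set of the strictly psh $\rho$ with $dd^c\rho \geqslant c_0\beta$, for each $\xi \in \partial\Omega$ one has a local psh function behaving like $\rho$ plus a quadratic term that dominates $-\mathrm{dist}(\cdot,\xi)$ near $\xi$; using the modulus $\omega_\phi$ of the boundary data, one builds, for each $\xi$, a psh function $v_\xi \leqslant \phi$ on $\partial\Omega$ with $v_\xi(z) \geqslant \phi(\xi) - \omega_\phi(\sqrt{d(z,\xi)}) - (\text{const})\,\rho(z)$, i.e. the square root appears precisely because the strict psh barrier controls the square of the distance. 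Taking the sup over $\xi$ and using $h_\phi \geqslant v_\xi$ gives a lower bound for $h_\phi(z)$ near the boundary of the form $\phi(\xi)-C\omega_\phi(\sqrt{d(z,\xi)})$; combined with the trivial upper bound $h_\phi \leqslant \sup_{\partial\Omega}\phi$ and an interior propagation argument, this yields the modulus of continuity near $\partial\Omega$. For the interior estimate one exploits maximality: on $\Omega' \Subset \Omega$, $h_\phi$ is a maximal psh function, and its modulus at an interior point $x$ is controlled by its modulus on a slightly larger set via the comparison principle applied to $h_\phi(\cdot)$ and $h_\phi(\cdot + a) + (\text{const})|a|^2 \cdot(\text{something involving }\rho)$ — again producing the square root.

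The main obstacle, and where I would spend the most care, is making the translation/comparison argument legitimate on the \emph{singular} Stein space $X$, since one cannot simply translate points on $X$. The remedy is to work in a fixed local embedding $j : U \hookrightarrow \mathbb{C}^N$ away from $0$ (the point $x$ is a regular point, $x \neq 0$), carry out the translation in the ambient $\mathbb{C}^N$, and control the discrepancy between the Euclidean distance and $d_\beta$ on compact subsets of $X_{\mathrm{reg}}$; this is exactly where the constant $C_x$ enters and blows up as $x \to 0$, because the comparability constant between $d_\beta$ and the ambient Euclidean distance, as well as the size of the largest ball around $x$ contained in $X_{\mathrm{reg}}$, degenerate near the singularity. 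I would also need Lemma \ref{lema: srf} here (as flagged in the excerpt) to relate $d_\beta$-balls around $x$ to sublevel sets of a smooth distance-like function with unit gradient, so that the psh barriers built from $\rho$ translate into genuine metric estimates. Once the square-root loss in the distance is isolated as coming from the quadratic nature of the strictly psh barrier, assembling items (1)--(3) is routine.
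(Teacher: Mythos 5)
There is a genuine gap, and it comes from your very first move. You define $h_\phi$ as the full Perron--Bremermann envelope, observe it is maximal, and identify it with $u_\phi$; item (3) then becomes the assertion that the \emph{solution} of the homogeneous Dirichlet problem itself has modulus $C\,\omega_\phi(t^{1/2})$ on all of $\Omega$. That is not what the lemma needs, and it is essentially the hard content of the whole paper rather than something you can dispatch at this stage. Your proposed interior propagation is a Walsh-type translation argument, and you yourself flag the obstacle: there are no translations on $X$. Working in a local embedding $j:U\hookrightarrow\mathbb{C}^N$ does not repair this, because $h_\phi$ lives on the analytic set $j(U)$, and a translate of that set is no longer contained in $X$; so $h_\phi(\cdot+a)$ is not a competitor in the envelope, and ``controlling the discrepancy between $d_\beta$ and the Euclidean distance'' does not produce one. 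This failure of the translation trick on singular spaces is precisely why the paper develops the later machinery (resolution of singularities, Demailly/Kiselman regularization, the stability estimate of Theorem \ref{teo: stability sing}) to control the solution in the interior; even the continuity of the envelope on $\overline\Omega$, which you call standard, is nontrivial here and was only established in \cite{GGZ23a}. Moreover, even if your interior argument could be salvaged it would only yield a constant $C_x$ blowing up at the singular point, which is strictly weaker than the lemma's uniform constant $C$.

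The part of your sketch that is on target is the boundary barrier: the paper's proof is built exactly on the quadratic competitor you describe, namely $g:=B\rho-(d_\beta(\cdot,\xi))^2$ (psh for $B$ large), composed with $\chi(t)=-\overline{\omega}_\phi((-t)^{1/2})$ where $\overline{\omega}_\phi$ is the minimal concave majorant of $\omega_\phi$ --- this is where the $t^{1/2}$ loss comes from. But the paper never propagates anything into the interior. For each $\xi\in\partial\Omega$ it extends the local function $\chi(g)+\phi(\xi)$ to all of $\overline\Omega$ by a max-gluing with the constant $\inf_{\partial\Omega}\phi$, obtaining a global subsolution $h_\xi\leqslant\phi$ on $\partial\Omega$ with $h_\xi(\xi)=\phi(\xi)$ and $\omega_{h_\xi}(t)\leqslant C\,\omega_\phi(t^{1/2})$ with $C$ uniform in $\xi$; then $h_\phi:=\sup_{\xi\in\partial\Omega}h_\xi$. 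Because the family $\{h_\xi\}$ is equicontinuous with this common modulus, the envelope is automatically continuous, psh, equal to $\phi$ on $\partial\Omega$, and inherits the modulus --- so $h_\phi$ is a \emph{subsolution} lying below $u_\phi$ (by the comparison principle), not $u_\phi$ itself, and no interior regularity of $u_\phi$ is claimed or needed at this point. Reshaping your argument along these lines --- drop the identification with $u_\phi$, keep your boundary barriers, glue and take the sup over $\xi$ --- closes the gap.
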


\begin{proof}
We set $h_\phi \in PSH(\Omega) \cap C^{0}(\overline{\Omega})$ such that:

\noindent
\[
\begin{cases}
    \omega_{h_{\phi}}(t) \leqslant C\omega_{\phi}(t^{1/2}) & \text{ in} \quad \Omega,  \\
    h_{\phi} = \phi & \text{ on} \quad \partial \Omega.
\end{cases}
\]

With such $h_{\phi} \in PSH(\Omega)$ we have ${(dd^{c}h_{\phi})}^{n} \geqslant 0 = {(dd^{c}u_{\phi})}^{n}$. 
Hence, by the comparison principle, we get $h_\phi \leqslant u_{\phi}$. 
Similarly, we get $-h_{-\phi} \leqslant -u_{\phi}$; 
hence $h_{-\phi} \geqslant u_{\phi}$.
Now we start the construction of such $h_\phi$:

\begin{itemize}
    \item[\textit{Step 1}:]  \textit{For fixed $\xi \in \partial \Omega$, construct a function $h_\xi$ close to $\xi$.}
\end{itemize}

Fix $\xi \in \partial \Omega$. We want to find a function 
$h_\xi \in PSH(\Omega) \cap C^{0}(\overline{\Omega})$ such that:

\noindent
\[
\begin{cases}
    \omega_{h_{\xi}}(t) \leqslant C\omega_{\phi}(t^{1/2}) & ,  \\
    h_{\xi}(x) \leqslant \phi(x) & \text{ for any } x \in \partial \Omega, \\
    h_{\xi}(\xi) = \phi(\xi) & .
\end{cases}
\]

In this first step, we will construct 
$h_{\xi}$ above just around $\xi$.
Take $B>0$ large enough such that: $g(x) := B\rho(x) - {(d_{\beta}{(x,\xi)})}^2$ is in $PSH(\Omega)$.

Consider $\overline{\omega}_{\phi}$, the minimal concave 
majorant of $\omega_{\phi}$, and define $\chi(t) := -\overline{\omega}_\phi({(-t)}^{1/2})$, 
which is a convex non-decreasing function on $[-d^2,0]$, 
where $d = diam(\Omega)$, the diameter of $\Omega$.

Fix $r>0$ sufficiently small such that $|g(x)| \leqslant d^2$ in $B_{r}(\xi) \cap \Omega$ and define for $x \in B_{r}(\xi) \cap \overline{\Omega}$: $\widetilde{g}(x) = \chi(g(x)) + \phi(\xi) = -\overline{\omega}_{\phi}[{({(d_{\beta}(x,\xi))}^2 - B\rho(x))}^{1/2}] + \phi(\xi)$. 
Note that $\widetilde{g}$ is a continuous psh function on $B_{r}(\xi) \cap \Omega$.

We have $\forall x \in \partial \Omega \cap B_{r}(\xi)$, $\phi(\xi) - \overline{\omega}_{\phi}(d_{\beta}(x,\xi)) \leqslant \phi(x)$.
However, by our construction, $\widetilde{g}(x) = \phi(\xi) - \overline{\omega}_{\phi}(d_{\beta}(x,\xi))$ 
for any $x \in \partial \Omega \cap B_{r}(\xi)$.
Hence, $\widetilde{g}(x) \leqslant \phi(x), \forall x \in \partial \Omega \cap B_{r}(\xi)$ 
and $\widetilde{g}(\xi) = \phi(\xi)$.
Using the subadditivity of $\overline{\omega}_{\phi}$ and also 
the fact that $\forall t,\lambda >0; \omega_{\phi}(\lambda t) \leqslant \overline{\omega}_{\phi}(\lambda t) \leqslant (1+ \lambda) \omega_{\phi}(t)$ 
(see~\cite[Lemma 4.1]{Cha14}) we get:

\noindent
\begin{align*}
    \omega_{\widetilde{g}}(t) & = \underset{d_{\beta}(x,y)\leqslant t}{\sup}|\widetilde{g}(x) -\widetilde{g}(y)| \\
     & \leqslant \underset{d_{\beta}(x,y)\leqslant t}{\sup} \overline{\omega}_{\phi}[{({(d_{\beta}(x,\xi))}^2 -{(d_{\beta}{(y,{\xi})})}^2 - B(\rho(x) - \rho(y)))}^{1/2}] \\
     & \leqslant \underset{d_{\beta}(x,y)\leqslant t}{\sup} \overline{\omega}_{\phi}[{(2d + B_{1})}^{1/2}{(d_{\beta}{(x,y)})}^{1/2}] \\
     & \leqslant (1+{(2d + B_{1})}^{1/2})\underset{d_{\beta}(x,y)\leqslant t}{\sup} \omega_{\phi}[{(d_{\beta}{(x,y)})}^{1/2}] \\
      & \leqslant (1+{(2d + B_{1})}^{1/2})\underset{d_{\beta}(x,y)\leqslant t}{\sup} \omega_{\phi}[{(t)}^{1/2}].
\end{align*}

From the second line to the third, there are the 
following arguments: the first term goes as $d_1^2 - d_2^2 = (d_1 - d_2)(d_1 + d_2)$.
Then, the first difference of distances can 
be bounded by $d_{\beta}(x,y)$ by triangular inequality.
Also, the second term can be bounded by $2d$. 
Finally, the last term comes from the fact 
that, close to the boundary, we have that $|\rho(x) - \rho(y)|$ 
behaves like $d_{\beta}(x,y)$, by Lemma~\ref{lema: srf} up to 
renormalization of the gradient close to the boundary.

\begin{itemize}
        \item[\textit{Step 2}:] \textit{Extending our local $\widetilde{g}$ to all $\overline{\Omega}$ as $h_\xi$.}
\end{itemize}

Recall $\xi \in \partial \Omega$, fix $0 < r_1 < r$ and $\gamma_1 > \frac{d}{r_1}$ 
such that $\forall x \in \overline{\Omega} \cap \partial B_{r_1}(\xi)$:

\noindent
\[
-\gamma_1 \overline{\omega}_{\phi}[{({(d_{\beta}{(x,\xi)})}^2 - B{\rho}(x))}^{1/2}] = \gamma_1 (\widetilde{g}(x) - \phi(\xi)) \leqslant \underset{\partial \Omega}{\inf}~ \phi - \underset{\partial \Omega}{\sup}~ \phi.
\]

Set $\gamma_2 = \underset{\partial \Omega}{\inf}~ \phi$. 
Then:

\noindent
\begin{equation}\label{eq: glueing hom barrier}
    \gamma_1 (\widetilde{g}(x) - \phi(\xi)) + \phi(\xi) \leqslant \gamma_2
\end{equation} 
\noindent for $x \in \partial B_{r_1}(\xi) \cap \overline{\Omega}$. Consider:

\noindent
\[
h_{\xi}(x) := \begin{cases}
    \max\{\gamma_1 (\widetilde{g}(x) - \phi(\xi)) +  \phi(\xi), \gamma_2 \} & \textit{for } x \in B_{r_{1}}(\xi) \cap \overline{\Omega}, \\
    \gamma_2 & \textit{for } x \in \overline{\Omega} \setminus B_{r_{1}}(\xi). \\
\end{cases}
\]

It follows from inequality~\eqref{eq: glueing hom barrier} that $h_\xi$ 
is a psh function on $\Omega$, 
continuous on $\overline{\Omega}$ and such that $h_\xi (x) \leqslant \phi(x), \forall x \in \partial \Omega$ 
because on $\partial \Omega \cap B_{r_{1}}(\xi)$:

\noindent
\[
\gamma_1 (\widetilde{g}(x) - \phi(\xi)) + \phi(\xi) = -\gamma_1 \overline{\omega}_{\phi}[d_{\beta}(x,\xi)] + \phi(\xi) \leqslant -\overline{\omega}_{\phi}[d_{\beta}(x,\xi)] +\phi(\xi) \leqslant \phi(x).
\]

Then finally, we have that $h_\xi$ is a barrier relative to 
the point $\xi$, chosen generically, that is: for each $\xi \in \partial \Omega, \exists h_\xi \in S(\Omega, \phi, 0); h_\xi (\xi) = \phi(\xi)$ 
and $\omega_{h_{\xi}}(t) \leqslant C \omega_{\phi}(t^{1/2})$; 
for $C = (1+{(2d+B_{1})}^{1/2})$ and $S(\Omega,\phi, 0)$ is 
the set of subsolutions for the Dirichlet problem $MA(\Omega, \phi, 0)$.

\begin{itemize}

    \item[\textit{Step 3}:] \textit{Take envelope in $\xi$ to obtain the desired barrier $h_\phi$.}
\end{itemize}

Now set $h_{\phi}(x) = \sup \{h_{\xi}(x) | \xi \in \partial \Omega\}$, which is a sup over a compact family; hence $h_{\phi}^*$ is a psh function.
Note that $0 \leqslant \omega_{h_{\phi}}(t) \leqslant C \omega_{\phi}(t^{1/2})$; then $\omega_{h_{\phi}}(t) \to 0$ as $t \to 0$. This implies $h_{\phi} \in C^{0}(\overline{\Omega})$ 
and $h_{\phi} = h_{\phi}^{*} \in PSH(\Omega)$. 
(Hence $h_{\phi} \in S(\Omega, \phi, 0)$.)
By construction, $h_\phi = \phi$ on $\partial \Omega$.
Thus, we have the desired lower-barrier for $MA(\Omega, \phi, 0)$ 
with modulus of continuity $\omega_{\phi}(t^{1/2})$.
\noindent   
\end{proof}

\section{Regularization of the solution}\label{Sing: reg and mod}

This section provides an appropriate regularization scheme
of the solution $u = u(\Omega, \phi, f)$ to $MA(\Omega, \phi, f)$, 
following ideas from~\cite{GKZ08,DDGHKZ14}.

\subsection{Defining regularization}

First, we define the set $\Omega_{\delta} := \{x \in \Omega \mid d_{\beta}(x, \partial \Omega) > \delta \}$ 
for $0<\delta \leqslant \delta_0$ where $\delta_0 $ is 
fixed such that $\Omega_{\delta_0} \neq \emptyset$.

We consider $\pi: \widetilde{X} \to X$ a resolution of 
singularities of $X$. We will denote objects 
on $\widetilde{X}$ with a tilde. For example, 
$\widetilde{\Omega} = \{ \pi^*\rho := \widetilde{\rho} < 0 \}$.
It is known that $\widetilde{\Omega}$ is a 
Kähler manifold. Fix a 
Hermitian form $\tau$ on $\widetilde{X}$ such that 
$\pi^{*}d \leqslant d_{\tau}$, such as $\tau := \widetilde{C}\pi^{*}\beta + \varepsilon \theta $ 
for some constants $\widetilde{C}>1$ big enough and $0 < \varepsilon$ 
small enough, with some smooth closed (1,1)-form $\theta$ 
Assume for simplicity\footnote{By~\cite[Page 624]{DDGHKZ14} 
and~\cite[Page 2036]{LPT21}, taking $\beta$ and $\tau$ 
not closed will change the exponencial map but not the estimates.} 
that $\tau$ and $\beta$ closed.

Consider for each $\widetilde{x} \in \widetilde{X}$ the 
exponential map $\exp_{\widetilde{x}}: T_{\widetilde{x}}\widetilde{X} \ni \xi \mapsto \exp_{\widetilde{x}}(\xi) \in \widetilde{X}$ 
defined by $\exp _{\widetilde{x}}(\xi)=\sigma(1)$ with $\sigma$ 
being the geodesic such that $\sigma(0) = \widetilde{x}$ 
and initial velocity $\sigma^{\prime}(0)=\xi$. 
Let $\widetilde{u} := \pi^*u$ be the pull-back of the 
solution to $MA(\Omega, \phi, f)$. Define its 
$\delta$-regularization $\eta_{\delta} \widetilde{u}$ as
in~\cite{Dem82} by:

\noindent
$$
\eta_{\delta} \widetilde{u}(\widetilde{x}) = \frac{1}{\delta^{2 n}} \int_{\xi \in T_{\widetilde{x}} \widetilde{X}} \widetilde{u}\left(\exp _{\widetilde{x}}(\xi)\right) \eta\left(\frac{|\xi|_{\tau}^{2}}{\delta^{2}}\right) d V_{\tau}(\xi), \quad \delta>0 \text{ and } \widetilde{x} \in \widetilde{\Omega}_{\delta}.
$$

 Here $\eta$ is a smoothing kernel, $|\xi|_{\tau}^{2}$ 
 stands for $\sum_{i, j=1}^{n} g_{i\bar{j}}(\widetilde{x}) \xi_{i} \bar{\xi}_{j}$, and 
 $d V_{\tau}(\xi)$ is the induced 
 measure $\frac{1}{2^{n} n !}\left(d d^{c}|\xi|_{\tau}^{2}\right)^{n}$. 

It is known that:

\noindent
$$
\exp : T \widetilde{X} \rightarrow \widetilde{X}, \quad T \widetilde{X} \ni(\widetilde{x}, \xi) \mapsto \exp _{\widetilde{x}}(\xi) \in \widetilde{X}, \xi \in T_{\widetilde{x}} \widetilde{X}
$$

\noindent has the following properties:
\begin{enumerate}
    \item $\exp$ is a $C^{\infty}$ mapping;

    \item $\forall \widetilde{x} \in \widetilde{X}, \exp _{\widetilde{x}}(0)=\widetilde{x}$ and $D_{\xi} \exp (0)=\operatorname{Id}_{T_{\widetilde{x}} \widetilde{X}}$.

\end{enumerate}

One can formally extend $\eta_{\delta}\widetilde{u}$ as a function on $\widetilde{\Omega}_{\delta} \times \mathbb{C}$ 
 by putting $U(\widetilde{x}, w):=\eta_{\delta} \widetilde{u}(\widetilde{x})$ 
 for $w \in \mathbb{C}$ with $|w|=\delta$. 
 Coupling the estimate of the hessian of $U(\widetilde{x}, w)$ 
with Kiselman's minimum principle, one gets~\cite[Lemma 1.12]{BD12}:

\begin{lemma}\label{lema: reg dem}
    For a bounded psh function $\widetilde{u}$ on the Kähler manifold $(\widetilde{\Omega} \Subset \widetilde{X}, \tau)$ 
    we let $U(\widetilde{x}, w)$ be its regularization defined as above. 
    Define the Kiselman-Legendre transform at level $c$ by

\noindent
$$
\widetilde{u}_{c, \delta}(\widetilde{x}) :=\inf _{0 \leq t \leq \delta}\left[U(\widetilde{x}, t)+K t^{2}-K \delta^{2}-c \log (t / \delta)\right]
$$

\noindent for $\widetilde{x} \in \widetilde{\Omega}_{\delta}$ 
and some positive constant $K$ depending on 
the curvature of $(\overline{\widetilde{\Omega}}, \tau)$ such that the function $U(\widetilde{x}, t)+K t^{2}$ 
is increasing for $t \in (0,\delta_1)$ for 
some $0 < \delta_1 \leqslant \delta_0$ small enough. Also one 
has the following estimate for the complex hessian:

\noindent
$$
d d^{c} \widetilde{u}_{c, \delta} \geqslant -\left(Ac + K \delta\right) \tau
$$

\noindent where $A$ is a lower bound of the negative part of the bisectional curvature of $(\overline{\widetilde{\Omega}},\tau)$.
\end{lemma}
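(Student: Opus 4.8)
The plan is to follow Demailly's construction of the regularized psh function via the exponential map (as in \cite{Dem82,Dem94}) and Kiselman's minimum principle, carried out on the resolution $\widetilde{X}$ where $\widetilde{\Omega}$ is a genuine relatively compact domain in a Kähler manifold, so that all the smooth Riemannian machinery (geodesics, Jacobi fields, curvature bounds) is available. First I would verify that $U(\widetilde{x},w)$, as a function on $\widetilde{\Omega}_{\delta}\times\mathbb{C}$ depending on $w$ only through $|w|$, is psh on a suitable product neighborhood: this is the standard estimate showing that for a psh $\widetilde u$, the average $\eta_\delta\widetilde u(\widetilde x)$ over $\exp_{\widetilde x}$-balls is, after the correction term $+K|w|^2$, plurisubharmonic, where $K$ is controlled by the curvature of $(\overline{\widetilde\Omega},\tau)$ — the point being that the deviation of $\exp_{\widetilde x}$ from being holomorphic in $\widetilde x$ is second order and measured by the bisectional curvature. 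The constant $K$ is chosen precisely so that $t\mapsto U(\widetilde x,t)+Kt^2$ is increasing on $(0,\delta_1)$ for $\delta_1\le\delta_0$ small.

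Next I would form the Kiselman--Legendre transform
\[
\widetilde u_{c,\delta}(\widetilde x)=\inf_{0\le t\le\delta}\bigl[U(\widetilde x,t)+Kt^2-K\delta^2-c\log(t/\delta)\bigr].
\]
The key structural observation is that $U(\widetilde x,w)+K|w|^2 - K\delta^2$ is psh in $(\widetilde x,w)$, hence adding the psh (in $w$) term $-c\log|w|$ keeps it psh jointly, and taking the infimum over the modulus $t=|w|\in(0,\delta]$ is exactly the situation of Kiselman's minimum principle: the partial Legendre-type infimum of a function that is psh in $(\widetilde x,w)$ and depends on $w$ only through $|w|$, and is (weakly) convex/monotone in $\log|w|$, produces a psh function of $\widetilde x$. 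This yields that $\widetilde u_{c,\delta}$ is psh on $\widetilde\Omega_\delta$. For the complex Hessian bound, I would differentiate through the infimum: at a point the infimum is attained at some $t_0=t_0(\widetilde x)$, and $dd^c$ of the bracket in the $\widetilde x$ variable is bounded below by $-(Ac+K\delta)\tau$ — the $Ac$ term coming from the interaction of the $-c\log(t/\delta)$ weight with the curvature of the fibers (here $A$ bounds the negative part of the bisectional curvature), and the $K\delta$ term from $Kt^2\le K\delta^2$ together with the a priori Hessian estimate on $U$; since an infimum of functions each satisfying this lower bound again satisfies it, the estimate descends to $\widetilde u_{c,\delta}$.

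The main obstacle — and the part deserving the most care — is the joint plurisubharmonicity needed to invoke Kiselman's principle: one must check that the error terms in the Taylor expansion of $(\widetilde x,\xi)\mapsto\widetilde u(\exp_{\widetilde x}(\xi))$ are genuinely absorbed by $+K|w|^2$ uniformly on $\overline{\widetilde\Omega}$, and that the technical hypotheses of Kiselman's minimum principle (semicontinuity, the logarithmic convexity in the fiber variable, properness of the infimum over $t\in(0,\delta]$) are met after the $+Kt^2$ correction makes the integrand monotone. Once that is in place, both conclusions follow by the cited argument of \cite[Lemma 2.1]{DDGHKZ14}, and I would simply record that everything is uniform in $\widetilde x\in\widetilde\Omega_\delta$ and in $\delta\le\delta_1$.
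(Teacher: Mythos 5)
Note first that the paper does not reprove this statement: it is quoted verbatim as \cite[Lemma 2.1]{DDGHKZ14}, with the one-line indication that it follows by coupling the Hessian estimate for $U(\widetilde{x},w)$ with Kiselman's minimum principle. Your sketch follows that same route, but its two central claims are not correct as stated, and they hide exactly the point of the lemma. First, $U(\widetilde{x},w)+K|w|^{2}$ is \emph{not} plurisubharmonic in $(\widetilde{x},w)$; the constant $K$ only restores monotonicity (and convexity) of $t\mapsto U(\widetilde{x},t)+Kt^{2}$ in $\log t$. What Demailly's construction actually gives is a one-sided Hessian estimate of the form
\[
dd^{c}_{(\widetilde{x},w)}U \;\geqslant\; -\bigl(A\,\lambda(\widetilde{x},|w|)+K|w|\bigr)\,\tau,
\qquad
\lambda(\widetilde{x},t):=\frac{\partial}{\partial\log t}\bigl(U(\widetilde{x},t)+Kt^{2}\bigr)\;\geqslant 0,
\]
with the curvature entering through $A$ and $K$. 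If joint plurisubharmonicity held as you assert, Kiselman's principle would give $dd^{c}\widetilde{u}_{c,\delta}\geqslant -K\delta\,\tau$ with no $Ac$ term at all, so your ``key structural observation'' proves too much and cannot be the right input. The $Ac$ term arises because at the radius $t_{0}(\widetilde{x})$ realizing the infimum one has $\lambda(\widetilde{x},t_{0})\leqslant c$ (this is where the weight $-c\log(t/\delta)$ and the convexity in $\log t$ are used), and plugging this into the displayed estimate yields the bound $-(Ac+K\delta)\tau$; attributing $Ac$ to an ``interaction of the weight with the curvature of the fibers'' skips this mechanism entirely.

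Second, the closing step ``an infimum of functions each satisfying this lower bound again satisfies it'' is false in general: an infimum of psh functions (or of functions with $dd^{c}\geqslant -C\tau$) need not satisfy any such bound — that failure is precisely why Kiselman's minimum principle is invoked, and its hypotheses (plurisubharmonicity of a suitably corrected function of $(\widetilde{x},w)$, independence of $\arg w$, convexity in $\log|w|$) must be verified \emph{after} adding a compensating term such as $(Ac+K\delta)\psi$ with $dd^{c}\psi\geqslant\tau$ locally, not used as a generic ``inf descends'' principle. So while your overall strategy (Demailly regularization via the exponential map on the resolution, curvature-controlled $K$, Kiselman--Legendre transform) is the same as in the cited source, the proof as written has a genuine gap: it replaces the precise Hessian inequality involving $\partial(U+Kt^{2})/\partial\log t$ and the bound $\lambda\leqslant c$ at the minimizing radius by an unjustified joint psh claim and an invalid stability-under-infimum claim. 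Repairing it means quoting (or reproving) Demailly's estimate in the form above and then running Kiselman's principle on the corrected function, as in \cite{Dem82} and \cite[Lemma 2.1]{DDGHKZ14}.
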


\subsection{Correcting the positivity}

Before extending to the boundary as in~\cite{GKZ08}, we need to 
correct the positivity of our regularizing function. 
The construction above creates a well behaved
regularizing function on compact subsets. However, we 
require plurisubharmonicity to use the stability estimate.

As in~\cite[Section 3.2.4]{GGZ23b}, since $X$ has an isolated singularity, 
we get the following argument: The exceptional divisor $E$ 
of the resolution has the property that there exist 
positive rational numbers $(b_i)_{i \in I}$ 
such that $-\sum_{i \in I}b_i E_i$ is $\pi$-ample, 
where each $E_i$ is an irreducible component of $E$. 
Then, on each $\mathcal{O}_{\widetilde{X}}(E_i)$ 
pick a section $s_i$ cutting out $E_i$ and choose 
an appropriate smooth Hermitian metric $h_i$ such that

\noindent
\[
\rho^{\prime} := B_1(\pi^*(B_0\rho) + \sum_{i \in I}b_i \log|s_i|_{h_i}^2)
\]

\noindent is strictly psh in $\widetilde{\Omega}$ for 
some $B_0 > 1$ big enough and choose $B_1 > 0$ such that $dd^{c}\rho^{\prime} \geqslant \tau$.
Assume without loss of generality $\widetilde{u} \geqslant 0 $. 
Now, assuming\footnote{One should keep in mind that Section~\ref{Sing: barriers} already garanties the existence of such barriers, and a control in their modululs of continuity, for some specific Dirichlet problems.} 
the existence of barriers as in 
Definition~\ref{def: barriers sing}, we fix:

\noindent
\begin{equation}\label{eq: Kiselman reg func}
    \widetilde{u}_\delta := \widetilde{u}_{c ,\delta}; \quad  Ac:=   H(\delta) - K\delta
\end{equation}

\noindent for $H: \mathbb{R}^{+} \to \mathbb{R}^{+}$ 
continuous, increasing function such that $\omega_{H}(\delta) \gtrsim \omega_{v}(\delta) \gtrsim O(\delta)$ 
and $H(0) = 0$, for $v$ a barrier from 
Definition~\ref{def: barriers sing} 
and $A,K$ from Lemma~\ref{lema: reg dem}. Take $\delta < \delta_2 \leqslant \delta_1$ 
for $\delta_2$ small enough such that the $c$ above is 
positive. We have control
on its loss of positivity, hence:

\noindent
\[
\widehat{u}_\delta := \widetilde{u}_\delta + (Ac + K\delta)\rho^{\prime} = \widetilde{u}_\delta + H(\delta)\rho^{\prime}.
\]

We have $dd^{c}\widehat{u}_{\delta} = dd^{c}\widetilde{u}_\delta + (Ac + K\delta)dd^{c}\rho^{\prime}$. By the 
definition of $\rho^{\prime}$ and Lemma~\ref{lema: reg dem} 
we get $\widehat{u}_{\delta} \in PSH(\widetilde{\Omega}_{\delta})$.
Now, we set $\check{u}_{\delta} := \pi_{*}\widehat{u}_{\delta}$,
that is for any $x \in \pi^{-1}(\Omega_{\delta}); \check{u}_{\delta}(\pi(x)) = \widehat{u}_{\delta}(x)$,
which is well defined on all $\Omega_{\delta}$ by the 
following arguments: since on $E = \pi^{-1}(0)$ the $\widehat{u}_{\delta}$ 
is constantly $-\infty$ by construction, and on the outside of $E$ we 
have that $\pi$ is an isomorphism. Moreover $\pi^{-1}(\Omega_{\delta}) \Subset \widetilde{\Omega}_{\delta}$, 
because $d \leqslant d_{\tau}$ by definition of $\tau$.

\subsection{Extending regularization to all \texorpdfstring{$\Omega$}{Omega}.}

Here, we will assume the existence of barriers as in 
Defnition~\ref{def: barriers sing} 
to control $u$ $\delta-$close to the boundary. 
The extension will be, for some constant $C>0$:

\noindent
\[
\overline{u}_\delta = \begin{cases}
    \max\{\check{u}_\delta, u+ 4CH(\delta) \} &\text{ in } \Omega_{2\delta} , \\
u + 4CH(\delta) &\text{ in }\Omega \setminus \Omega_{2\delta}.
\end{cases}
\]

Notice that because $\check{u}_\delta$ goes to $-\infty$ 
at $X_{\text{sing}}$, then close to the singularity 
(inside some ball $B_{\mu}(X_{\text{sing}}),\text{ for }$ $\mu>0$ small enough) $u + 4CH(\delta)$ 
will eventually be the greater term inside the max above, making $\overline{u}_{\delta}$ 
continuous and psh in $\Omega$.
For the gluing to be in $PSH(\Omega)$ we need that 
$\check{u}_\delta(x) \leqslant u(x) + 4C H(\delta), \forall x \in \partial \Omega_{2\delta}$. 
The barriers will give us this last inequality through the following arguments:

\subsubsection{Behaviour at the boundary}

Fix some $\lambda_0 > 0 $ small enough such that $\overline{B_{\lambda_0}(X_{\text{sing}})} \Subset \Omega_{\delta_0}$.

\begin{lemma}\label{lema: bound rel bound sing}
Assume $MA(\Omega, \phi, f)$ admits barrier functions $v,w$.
The solution $u = u(\Omega, \phi, f)$ satisfies, for some constant $C^{\prime}>0$:

\noindent
\[
|u(x) - u(\xi)| \leqslant C^{\prime} \omega_{v}(d(x , \xi)), \forall x \in \Omega \setminus B_{\lambda_0}(X_{\text{sing}}); \forall \xi \in \partial \Omega.
\]

\end{lemma}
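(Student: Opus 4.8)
The plan is to compare $u$ with the two barriers $v$ (lower) and $-w$ (upper) from Proposition \ref{prop: barriers sing} and bootstrap their modulus-of-continuity estimates, which hold uniformly on $\overline{\Omega} \setminus B_{\lambda_0}(0)$ up to the constant $C_x$ that blows up at $0$ (so away from $B_{\lambda_0}(0)$ it is a genuine uniform constant, say $C''$). First I would reduce to the case where $\xi \in \partial\Omega$ is the nearest boundary point to $x$, so that $d(x,\xi) = d(x,\partial\Omega)$; the general case follows by the triangle inequality since both $u$ and $v$ are continuous on $\overline\Omega$ and $\omega_v$ is subadditive (being essentially a concave majorant-type modulus).

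For such $x$ and $\xi$, the key inequalities are $v(x) \leqslant u(x) \leqslant -w(x)$ and $v(\xi) = \phi(\xi) = u(\xi) = -w(\xi)$. Hence
\[
u(x) - u(\xi) \;\geqslant\; v(x) - v(\xi) \;\geqslant\; -\,\omega_{v,x}(d(x,\xi))
\]
and symmetrically
\[
u(x) - u(\xi) \;\leqslant\; -w(x) - (-w(\xi)) \;=\; -w(x) + w(\xi) \;\leqslant\; \omega_{w,x}(d(x,\xi)).
\]
Combining, $|u(x) - u(\xi)| \leqslant \max\{\omega_{v,x}(d(x,\xi)), \omega_{w,x}(d(x,\xi))\}$. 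Since Proposition \ref{prop: barriers sing}(3) gives both $\omega_{v,x}$ and $\omega_{w,x}$ bounded by $C_x \max\{\omega_\phi(t^{1/2}), t^\gamma\}$, and since by Lemma \ref{lema: holder intermediate sing} the relevant modulus of continuity of $v$ (built from $u(\widehat\Omega,0,\widehat f)$ and $u(\Omega, \phi - \dots, 0)$) dominates $t^\gamma \gtrsim t$, we may simply bound everything by $C' \omega_v(d(x,\xi))$ for a suitable $C'$, absorbing the constants and using that $\omega_v$ already majorizes $\omega_\phi(t^{1/2})$ and $t^\gamma$ up to constants. For $x$ ranging over $\Omega \setminus B_{\lambda_0}(0)$, all the constants $C_x$ are uniformly bounded (this is exactly where $\lambda_0 > 0$ is used), yielding a single $C'$.

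The main obstacle, and the only real subtlety, is the reduction to $\xi$ = nearest boundary point when $\xi$ is an arbitrary boundary point: one writes $|u(x)-u(\xi)| \leqslant |u(x) - u(\xi_0)| + |u(\xi_0) - u(\xi)|$ where $\xi_0 \in \partial\Omega$ realizes $d(x,\partial\Omega)$, bounds the first term as above, and bounds the second by $\omega_\phi(d(\xi_0,\xi)) \leqslant \omega_\phi(d(x,\xi_0) + d(x,\xi)) \leqslant \omega_\phi(2\,d(x,\xi))$ using $d(x,\xi_0) \leqslant d(x,\xi)$; then subadditivity/concavity of the majorant of $\omega_\phi$ (hence of $\omega_v$, which dominates it) converts $\omega_\phi(2\,d(x,\xi))$ into $\leqslant 2\,\omega_v(d(x,\xi))$ up to a constant. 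One should also check that the barriers' estimates in Proposition \ref{prop: barriers sing} are stated at a point $x$ with constant $C_x$, so restricting to $\Omega \setminus B_{\lambda_0}(0)$ and taking $C' = \sup_{x \notin B_{\lambda_0}(0)} C_x \cdot (\text{absolute factors})$ is legitimate; this is where the statement's phrase "for some constant $C'$" is honest. Everything else is a one-line sandwiching argument.
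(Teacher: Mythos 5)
Your proposal is correct and is essentially the paper's argument: sandwich $u$ between the barriers $v$ and $-w$ of Proposition \ref{prop: barriers sing}, use the boundary equalities $v(\xi)=\phi(\xi)=u(\xi)=-w(\xi)$, and invoke the barriers' modulus-of-continuity bound, which is uniform on $\Omega \setminus B_{\lambda_0}(0)$. The only remark is that your reduction to the nearest boundary point $\xi_0$ (which you call the main subtlety) is unnecessary: the inequalities $v \leqslant u \leqslant -w$ on $\Omega$ together with the boundary equalities hold simultaneously for every $x \in \Omega$ and every $\xi \in \partial\Omega$, so the one-line sandwich already gives the estimate for arbitrary $\xi$, exactly as in the paper.
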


\begin{proof}
From Definition~\ref{def: barriers sing}:

\noindent
\[
v(x) - v(\xi) \leqslant u(x) - \phi(\xi) \leqslant - (w(x) - w(\xi)), \forall x \in \Omega \setminus B_{\lambda_0}(X_{\text{sing}}); \forall \xi \in \partial \Omega.
\]

By the modulus of continuity of the barriers 
we get, for some constant $C^{\prime}>0$:

\noindent
\[
|u(x) - u(\xi)| \leqslant C^{\prime} \omega_{v}(d(x , \xi)), \forall x \in \Omega \setminus B_{\lambda_0}(X_{\text{sing}}); \forall \xi \in \partial \Omega.
\]
\noindent
\end{proof}

\subsubsection{Behaviour near the boundary}

\begin{lemma}\label{lema: bound close bound sing}
    Assume $MA(\Omega, \phi, f)$ admits barrier functions $v,w$. 
    Take $r_0>r>0$ where $\overline{B_{r_0}(\xi)}\cap \Omega \subset \Omega \setminus B_{\lambda_0}(X_{\text{sing}}); \forall \xi \in \partial \Omega$. For any $\xi \in \partial \Omega$, 
Then the solution $u$ satisfies the following property:

\noindent
\[
|u(x_1 ) - u(x_2 )| \leqslant 2C^{\prime} \omega_{v}(r)
\]

\noindent for some constant $C^{\prime}>0$ and 
$\forall x_1,x_2 \in \overline{B_{r}(\xi)} \cap \Omega$.

\end{lemma}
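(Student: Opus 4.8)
The idea is to reduce the estimate for two points $x_1, x_2$ near the boundary to the boundary estimate already proved in Lemma \ref{lema: bound rel bound sing}, using nothing more than the triangle inequality on $\partial\Omega$. Since $\overline{B_{r_0}(\xi)} \cap \Omega \subset \Omega \setminus B_{\lambda_0}(0)$, both $x_1$ and $x_2$ lie in the region where Lemma \ref{lema: bound rel bound sing} applies. First I would pick a convenient boundary point: since $\xi \in \partial\Omega$ itself and $x_1, x_2 \in \overline{B_r(\xi)}$, we have $d(x_1, \xi) \leqslant r$ and $d(x_2, \xi) \leqslant r$ directly. Applying Lemma \ref{lema: bound rel bound sing} to the pair $(x_1, \xi)$ gives $|u(x_1) - u(\xi)| \leqslant C' \omega_v(d(x_1,\xi)) \leqslant C' \omega_v(r)$, using that $\omega_v$ is non-decreasing. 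Likewise $|u(x_2) - u(\xi)| \leqslant C' \omega_v(r)$.

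Then I would combine these via the triangle inequality:
\[
|u(x_1) - u(x_2)| \leqslant |u(x_1) - u(\xi)| + |u(\xi) - u(x_2)| \leqslant 2C'\omega_v(r),
\]
which is exactly the claimed bound. The constant $C'$ is the same one furnished by Lemma \ref{lema: bound rel bound sing}, so no new constants are introduced.

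\textbf{Main obstacle.} There is essentially no analytic difficulty here; the lemma is a bookkeeping step. The only point requiring a little care is checking the geometric hypotheses are genuinely available: namely that the radius $r_0$ (hence $r$) can be chosen small enough, uniformly in $\xi \in \partial\Omega$, so that $\overline{B_{r_0}(\xi)} \cap \Omega$ stays inside $\Omega \setminus B_{\lambda_0}(0)$ — this uses compactness of $\partial\Omega$ and that $\partial\Omega$ is at positive distance from the singular point $0$ (which holds since $0 \in \Omega$ is an interior point and $\overline{B_{\lambda_0}(0)} \Subset \Omega_{\delta_0}$). Once that is granted, monotonicity of $\omega_v$ and the triangle inequality close the argument immediately. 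I would also note in passing that $\xi$ is a legitimate choice of "anchor" point because $\omega_v$ is the modulus of continuity of the barrier $v$, which is continuous on $\overline{\Omega}$, so all the quantities above are finite.
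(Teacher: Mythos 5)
Your argument is correct and is essentially identical to the paper's own proof: apply Lemma \ref{lema: bound rel bound sing} to the pairs $(x_1,\xi)$ and $(x_2,\xi)$, use monotonicity of $\omega_{v}$, and conclude by the triangle inequality. Nothing further is needed.
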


\begin{proof}
    Fix $r>0$ and an arbitrary $\xi \in \partial \Omega$. 
    Take any two points 
    $x_1 , x_2 \in \overline{B_{r}(\xi)} \cap \Omega$. 
    Using the triangular inequality we get:
\begin{align*}
    |u(x_1 ) - u(x_2 )| & \leqslant |u(x_1 ) - u(\xi)| + |u(\xi) - u(x_2 )| \\
     & \leqslant C \omega_{v}(d(x_1 , \xi)) + C \omega_{v}(d(\xi , x_2 )) \\
     & \leqslant 2C \omega_{v}(r).
\end{align*}

The second line comes from Lemma~\ref{lema: bound rel bound sing}.
\noindent
\end{proof}

\subsubsection{Extension of regularization.}

By construction, the value of 
$u_{\delta} := \pi_*\widetilde{u}_{\delta}$ is under 
control relative to the supremum on a ball 
of radius $\delta$ for the points away from the 
singularity, where the resolution is an isomorphism. 
To prove the inequality necessary for the glueing process, 
assuming $MA(\Omega, \phi, f)$ admits barrier function $v,w$, 
we use an argument by contradiction:

\begin{proof}
Assume by contradiction that 
$\exists x_0 \in \partial \Omega_{2\delta}$ such 
that $\check{u}_\delta(x_0) > u(x_0) + 4CH(\delta)$. 
since $u_{\delta}(x_0) \leqslant \underset{\overline{B_{\delta}(x_{0})}}{\max}~ u$, 
then $\exists~ x^* \in \overline{B_{\delta}(x_0)}$ 
such that $u(x^*) + H(\delta)\pi_*\rho^{\prime}(x_0)> u(x_0) + 4CH(\delta)$.
The points on $\partial \Omega_{2\delta} $ 
have distance $2\delta$ to $\partial \Omega$, then take 
$\xi_0 \in \partial \Omega$ such that 
$d(\xi_0,x_0) = 2\delta$. We use the 
Lemma~\ref{lema: bound close bound sing} for 
$r = 4\delta$, $x_1 = x_0$ and $x_2 = x^*$, for $\delta < \min\{ r_0/2, \delta_2 \}$. 
Then:

\noindent
\[
    |u(x_0) - u(x^*)| \leqslant 2C^{\prime}\omega_{v}(4\delta).
\]

By assumption $|u(x^*) - u(x_0)| > 4CH(\delta) - H(\delta)\pi_*\rho^{\prime}(x_0) \geqslant 8C^{\prime}H(\delta)$. 
For $C \geqslant 2C^{\prime} + \underset{\partial{\Omega_{\delta}}}{\inf}\pi_*\rho^{\prime} + \underset{\Omega}{\sup}~\pi_*\rho^{\prime} > 0$.
Then we get: $2C^{\prime}(\omega_{v}(4\delta) - 4H(\delta)) > 0$, which is a
contradiction for any $ \omega_{v}(\delta) \gtrsim \delta$, 
as $\omega_{H}(\delta) \gtrsim \omega_{v}(\delta)$.
\noindent
\end{proof}

Hence, the global extension 
$\overline{u}_\delta$ is continuous and psh 
as we wanted to construct.

\section{\texorpdfstring{$L^1$}{L1} Estimate and general case}\label{Sing: Lr}

We will use the local theory developed in~\cite{GKZ08},
and then refined in~\cite{BKPZ16, Cha15}, 
to obtain the $L^1$ estimate of the regularizing 
function using a Laplacian estimate. With this we will
obtain the desired regularity of solutions to the 
Dirichlet problems we already have barriers for, 
from Section~\ref{Sing: barriers}, 
and then treat the general case.

\subsection{\texorpdfstring{$L^1$}{L1} Laplacian estimate}

Here, we calculate the estimate that dictates the behavior
away from the boundary, derived from the local 
theory using a Laplacian estimate. 
First, we compare the local regularizing 
function with the one constructed in Section~\ref{Sing: reg and mod}:

\begin{lemma}\label{lema: ctrl term 2}
    Let $\varepsilon>0$ be small enough, and $u \in PSH(\Omega) \cap C^{0}(\overline{\Omega})$
    with $(dd^{c}u)^{n} = f \beta^{n}$ in $\Omega$. Then 
    for any sufficiently small $\delta >0$ and chart on 
    the resolution $(U \Subset \widetilde{\Omega_{\delta/2}},\psi)$ 
    we have:

\noindent
\[
\eta_{\delta/2}\widetilde{u} \leqslant C_1 \Lambda_{\delta/2}\widetilde{u} + C_2 \delta^{2}
\]

\noindent for some constants $C_1, C_2>0$ and $\Lambda_{\delta}\widetilde{u}(\widetilde{x}) := \Lambda_{\delta} (\widetilde{u} \circ \psi^{-1})(\psi(\widetilde{x}))$ for any $\widetilde{x} \in U$.

Moreover, we have
\(
||\eta_{\delta}\widetilde{u} - \widetilde{u}||_{L^{1}(\widetilde{\Omega_{2\delta}})} \leqslant \widetilde{C}\delta^{1-\varepsilon},
\) for some constant $\widetilde{C}>0$. Also, if $\Delta \widetilde{u}$ has finite mass in $\widetilde{\Omega}$ 
then one can get $\delta^2$ instead of $\delta^{1-\varepsilon}$.

\end{lemma}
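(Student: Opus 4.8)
The plan is to transfer everything to local coordinates on the resolution and then invoke Theorem \ref{teo: local estimate sing}. First I would fix a chart $(U \Subset \widetilde{\Omega}_{\delta/2}, \psi)$ on $\widetilde{X}$ in which the metric $\tau$ is comparable to the Euclidean metric, so that geodesic balls $\exp_{\widetilde{x}}(\{|\xi|_\tau < \delta\})$ are sandwiched between Euclidean balls $B_{c_1\delta}(\psi(\widetilde{x}))$ and $B_{c_2\delta}(\psi(\widetilde{x}))$ for uniform constants $0 < c_1 < c_2$; this uses the smoothness of $\exp$ and the fact that $D_\xi\exp(0) = \mathrm{Id}$, together with a Taylor expansion of the exponential map in the chart. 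Pulling back the psh function $\widetilde{u} = \pi^*u$, the averaging operator $\eta_{\delta/2}$ becomes a convolution against a kernel supported in a ball of radius comparable to $\delta$. Since $\widetilde{u}$ is psh, the sub-mean-value property lets me dominate a geodesic average over a ball of radius $\sim\delta$ by a constant multiple of the Euclidean mean-volume average $\Lambda_{\delta/2}\widetilde{u}$ over a slightly larger ball; controlling the geometric distortion of the volume form $dV_\tau$ versus $dV$ and the support mismatch produces the clean inequality $\eta_{\delta/2}\widetilde{u} \leqslant C_1\Lambda_{\delta/2}\widetilde{u} + C_2\delta^2$, where the $\delta^2$ term absorbs the second-order error coming from the difference between geodesic and Euclidean balls (quantitatively, the discrepancy between averaging radii is $O(\delta^2)$, and $\widetilde{u}$ being locally bounded with Laplacian a positive measure gives the $O(\delta^2)$ bound on the resulting correction, in the spirit of \cite[Lemma 2.1]{DDGHKZ14}).

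Next I would integrate this pointwise inequality over $\widetilde{\Omega}_{2\delta}$. Writing $\eta_{\delta/2}\widetilde{u} - \widetilde{u} = (\eta_{\delta/2}\widetilde{u} - \Lambda_{\delta/2}\widetilde{u}) + (\Lambda_{\delta/2}\widetilde{u} - \widetilde{u})$ and using that $\eta_{\delta/2}\widetilde{u} \geqslant \widetilde{u}$ and $\Lambda_{\delta/2}\widetilde{u} \geqslant \widetilde{u}$ (both by sub-mean-value), the two differences are nonnegative, so it suffices to bound each in $L^1$. The second term is handled directly by Theorem \ref{teo: local estimate sing}, applied in each of finitely many charts covering $\widetilde{\Omega}_{2\delta}$ (away from the exceptional divisor the resolution is a biholomorphism, and near $E$ one works on a fixed relatively compact neighborhood), giving $\|\Lambda_{\delta/2}\widetilde{u} - \widetilde{u}\|_{L^1} \leqslant C\delta^{1-\varepsilon}$, and $\delta^2 \leqslant \delta^{1-\varepsilon}$ for $\delta$ small. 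The first term is bounded using $\eta_{\delta/2}\widetilde{u} - \Lambda_{\delta/2}\widetilde{u} \leqslant (C_1-1)\Lambda_{\delta/2}\widetilde{u} + C_2\delta^2 - (\eta_{\delta/2}\widetilde{u} - \widetilde{u}) + \dots$; more cleanly, I would estimate $\|\eta_{\delta/2}\widetilde u - \widetilde u\|_{L^1}$ directly by the same convolution-of-Laplacian argument that underlies Theorem \ref{teo: local estimate sing}: $\eta_{\delta/2}\widetilde u(\widetilde x) - \widetilde u(\widetilde x)$ is controlled by an integral of the Riesz-type potential of $dd^c\widetilde u \wedge \tau^{n-1}$ over a $\delta$-ball, whose $L^1$ norm is $O(\delta^{1-\varepsilon})$ in general and $O(\delta^2)$ when the total mass is finite.

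For the final assertion, I would observe that if $\Delta\widetilde{u}$ has finite mass in $\widetilde{\Omega}$, then the same two-part decomposition applies with the sharper conclusion of Theorem \ref{teo: local estimate sing} (which yields $\delta^2$ under the finite-mass hypothesis), and the geometric error term was already $O(\delta^2)$, so the whole bound improves to $\widetilde{C}\delta^2$. The main obstacle I anticipate is the first part — making the comparison $\eta_{\delta/2}\widetilde{u} \leqslant C_1\Lambda_{\delta/2}\widetilde{u} + C_2\delta^2$ fully rigorous: one must carefully quantify how the geodesic ball $\exp_{\widetilde x}(\{|\xi|_\tau<\delta\})$ deviates from a Euclidean ball in the chart (a $C^2$-size $O(\delta^2)$ perturbation of the boundary), show the smoothing kernel $\eta(|\xi|_\tau^2/\delta^2)\,dV_\tau(\xi)$ transforms to a kernel that is still dominated by a multiple of the normalized indicator of a Euclidean ball, and verify that the error incurred is genuinely $O(\delta^2)$ rather than merely $o(\delta)$ — this is where the uniform bound on $\|\widetilde{u}\|_{L^\infty}$ and the positivity of $dd^c\widetilde u$ are essential, exactly as in the derivation of the hessian estimate for $U(\widetilde x, w)$ preceding Lemma \ref{lema: reg dem}. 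Everything else is a routine partition-of-unity patching argument on the resolution.
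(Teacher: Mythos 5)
Your proposal is correct and follows essentially the same route as the paper: compare the exponential-map regularization $\eta_{\delta/2}\widetilde{u}$ with the Euclidean mean $\Lambda_{\delta/2}\widetilde{u}$ in charts via the expansion of $\exp$ and of $dV_\tau$ (which the paper simply quotes from \cite[Lemmas 2.3 and 2.4]{DDGHKZ14} rather than re-deriving), then apply Theorem \ref{teo: local estimate sing} chartwise and conclude by compactness of $\overline{\widetilde{\Omega_\delta}}$, with the finite-mass case giving $\delta^2$ from the same theorem. The only small slip is the claim $\eta_{\delta/2}\widetilde{u}\geqslant\widetilde{u}$, which on a curved manifold holds only up to the correction $K\delta^2$ (monotonicity of $t\mapsto\eta_t\widetilde{u}+Kt^2$), but this error is absorbed harmlessly into the $O(\delta^2)$ term.
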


\begin{proof}
For a fixed chart $(U \Subset \widetilde{\Omega_{\delta/2}},\psi)$ we have from the proof of~\cite[Lemma 2.3]{DDGHKZ14}:

\noindent
\[
\eta_{\delta/2}\widetilde{u}(\widetilde{x}) = \frac{1}{(\delta/2)^{2n}} \int_{\widetilde{y} \in \widetilde{X}} \widetilde{u}(\widetilde{y}) \eta\left( \frac{|\log_{\widetilde{x}}\widetilde{y}|}{(\delta/2)^{2}} \right) dV_{\tau}(\log_{\widetilde{x}}\widetilde{y})
\]

\noindent where $\widetilde{y} \mapsto \xi = \log_{\widetilde{x}}\widetilde{y}$ 
is the inverse function 
of $\xi \mapsto \widetilde{y} = \exp_{\widetilde{x}}(\xi)$. 
By the proof of~\cite[Lemma 2.4]{DDGHKZ14} we have:

\noindent
\[
dV_{\tau}(\log_{\widetilde{x}}\widetilde{y}) = \underset{j=1}{\overset{n}{\bigwedge}}\frac{i}{2}(dz_{j} - dw_{j}) \wedge (d\bar{z}_{j} - d\bar{w}_{j}) + O(d_{\tau}{({\widetilde{y}},\widetilde{x})}^{2}),
\]

\noindent where $(w,z) \mapsto (\widetilde{x}, \log_{\widetilde{x}}(\widetilde{y}))$  
represent local coordinates 
on a neighborhood of the zero section in $TU$, while $ (\widetilde{x}, \log_{\widetilde{x}}(\widetilde{y})) \mapsto (\widetilde{x}, \widetilde{y})$ 
is a diffeomorphism from that neighborhood onto the 
diagonal in $U\times U$. The $O(\cdot)$ term 
depends only on the curvature. 
By definition, coupled with the above equality:

\noindent
\[
\eta_{\delta/2}\widetilde{u}(\widetilde{y}) - \Lambda_{\delta/2}\widetilde{u}(\widetilde{y}) \leqslant
C_1 \Lambda_{\delta/2}\widetilde{u}(\widetilde{y}) + C_{2} \delta^{2},
\]

\noindent for any $\widetilde{y} \in U$ and some constants $C_1,C_2 >0$. Moreover, taking $\sup \eta = 1$ one can choose $C_1 = 0$, consequently applying Theorem~\ref{teo: local estimate sing}, one gets:

\noindent
\[
||\eta_{\delta/2}\widetilde{u} - \widetilde{u}||_{L^{1}(U)} \leqslant C_{3}\delta^{1-\varepsilon},
\]

\noindent for some constant $C_3 >0$ for the chart $(U, \psi)$.
Then, by compactness of $\overline{\widetilde{\Omega_{\delta}}}$,
the desired estimate is achieved for some constant $\widetilde{C}>0$:

\noindent
\[
||\eta_{\delta}\widetilde{u} - \widetilde{u}||_{L^{1}(\widetilde{\Omega_{2\delta}})} \leqslant \widetilde{C}\delta^{1-\varepsilon}.
\]
\end{proof}

\begin{remark}
The proof doesn't involve the boundary values of $u$. 
This is clear by the proof of~\cite[Theorems 3.3, 3.4 and Lemma 3.5]{BKPZ16},
nor it involves the barrier functions.

\end{remark}

\subsection{Main estimate}

Theorem~\ref{teo: final estimate} bellow gives 
us, provided we have appropriate barrier functions, our 
main estimate~\ref{teo: main result} 
and also~\ref{teo: holder sing}:

\begin{theorem}\label{teo: final estimate}
Assume $MA(\Omega, \phi,f)$ admits barriers $v,w$. The unique solution $u = u(\Omega, \phi, f)$ which belongs to 
 $PSH(\Omega) \cap C^{0}(\overline{\Omega})$ to $MA(\Omega,\phi,f)$, satisfies for any $x \in \Omega$:
\[
\omega_{u,x}(t) \leqslant C_{x} \omega_{H}(t)
\]
for some constants $C_{x} >0$ such that $C_{x} \to +\infty$ 
as $x \to X_{\text{sing}}$, $0 \leqslant \gamma < \frac{1}{(nq+1)}$ 
and $\omega_{H}(t) \gtrsim \max\{ \omega_{\phi}(t^{1/2}), t^{\gamma} \}$ 
or $\max\{ \omega_{\phi}(t^{1/2}), t^{2\gamma}\}$ if 
$\Delta \widetilde{u}$ has finite mass in $\Omega$.
\end{theorem}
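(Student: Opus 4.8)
The plan is to combine the barriers of Proposition~\ref{prop: barriers sing}, the regularization $\overline{u}_\delta$ constructed in Section~\ref{Sing: reg and mod}, the $L^1$ estimate of Lemma~\ref{lema: ctrl term 2}, and the stability estimate Theorem~\ref{teo: stability sing} into a single run of the classical Guedj--Kołodziej--Zeriahi scheme, keeping careful track of how the constants blow up near the singular point. Fix $x \in \Omega$; we estimate $\omega_{u,x}(t)$ for small $t$. The lower bound $u(y) - u(x) \geq -C_x\,\omega_H(d(x,y))$ for $y$ near $x$ comes directly from the lower-barrier $v$ and Lemma~\ref{lema: bound rel bound sing} when $x$ is near $\partial\Omega$, and from the interior regularity encoded in $\omega_H \gtrsim t^\gamma$ together with $\Delta u$ being locally bounded (as in Lemma~\ref{lema: holder intermediate sing}) otherwise; the dependence on $\lambda(x) = d(x,0)$ enters through the constants $B_0, B_1$ in $\rho'$ and through $\pi_*\rho'(x)$, all of which degenerate as $x \to 0$, giving the claimed $C_x \to +\infty$.

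The substantive direction is the upper bound. The idea is: for $\delta > 0$ small, compare $u$ with $\overline{u}_\delta$. By construction $\overline{u}_\delta \in PSH(\Omega) \cap C^0(\overline\Omega)$, and on $\partial\Omega$ one has $\overline{u}_\delta = u + 4CH(\delta) = \phi + 4CH(\delta)$, so $(\overline{u}_\delta - u)^*_+ \leq 4CH(\delta)$ on $\partial\Omega$. Apply Theorem~\ref{teo: stability sing} with $\varphi = u$, $\psi = \overline{u}_\delta$: since $(dd^c u)^n = f\beta^n$ with $f \in L^p$, we get
\[
\sup_\Omega(\overline{u}_\delta - u) \leq 4CH(\delta) + C'\big(\|(\overline{u}_\delta - u)_+\|_{L^1(\Omega,\beta^n)}\big)^\gamma.
\]
The $L^1$ term is controlled by $\|\check u_\delta - u\|_{L^1(\Omega_{2\delta})} + O(H(\delta))$, and $\|\check u_\delta - u\|_{L^1}$ is, after pushing to the resolution, exactly $\|\widehat u_\delta - \widetilde u\|_{L^1}$, which Lemma~\ref{lema: reg dem} and Lemma~\ref{lema: ctrl term 2} bound by $\eta_{\delta}\widetilde u - \widetilde u$ in $L^1$ plus the positivity-correction term $H(\delta)\|\rho'\|$, i.e. by $O(\delta^{1-\varepsilon}) + O(H(\delta))$ (or $O(\delta^2)$ when $\Delta\widetilde u$ has finite mass, which holds when $f$ is bounded near $\partial\Omega$, recovering the sharp exponents). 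Thus $\sup_\Omega(\overline{u}_\delta - u) \leq C''\big(H(\delta) + \delta^{(1-\varepsilon)\gamma}\big)$. Choosing $H$ so that $\omega_H(t) \gtrsim \max\{\omega_\phi(t^{1/2}), t^\gamma\}$ and $H(\delta) \gtrsim \delta^{(1-\varepsilon)\gamma}$ (possible for $\gamma$ slightly below $\frac{1}{nq+1}$ after absorbing the $\varepsilon$-loss), this reads $\sup_\Omega(\overline{u}_\delta - u) \leq C'' H(\delta)$.

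From here the standard argument closes the estimate: for $x \in \Omega_{2\delta}$ away from $0$ one has $\overline{u}_\delta(x) \geq \check u_\delta(x) = \pi_*\widehat u_\delta(x)$, and $\widehat u_\delta$ dominates the sup-convolution $\sup_{B_\delta(x)} \widetilde u$ up to the $H(\delta)\rho'$ correction, so that
\[
\sup_{B_\delta(x)} u \;\leq\; u(x) + C_x H(\delta)
\]
for $\delta \leq c\,\lambda(x)$, with $C_x$ carrying the factor $\pi_*\rho'(x)$ and the chart constants. Unwinding, $|u(y) - u(x)| \leq C_x H(d(x,y))$ for $d(x,y)$ small, hence $\omega_{u,x}(t) \leq C_x\,\omega_H(t)$. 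The main obstacle is bookkeeping: one must verify that every constant introduced — in the barrier estimates, in $\rho'$, in the Kiselman--Legendre transform $K$, in the chart comparison of Lemma~\ref{lema: ctrl term 2}, and in the glueing inequality $\check u_\delta \leq u + 4CH(\delta)$ on $\partial\Omega_{2\delta}$ — depends on $x$ only through $\lambda(x)$ and blows up no worse than the stated $C_x \to +\infty$, and that the choice of $H$ simultaneously satisfies $\omega_H \gtrsim \omega_\phi(t^{1/2})$, $\omega_H \gtrsim t^\gamma$, and $H(\delta) \gtrsim \delta^{(1-\varepsilon)\gamma}$ with the freedom in $\varepsilon$ and in $\gamma < \frac{1}{nq+1}$ reconciled so that Corollary~\ref{teo: holder sing} drops out with exponent $\alpha^* < \min\{\alpha/2, \frac{1}{nq+1}\}$.
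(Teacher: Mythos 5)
Your setup through the stability step is essentially the paper's: you apply Theorem \ref{teo: stability sing} to $\overline{u}_\delta - u - 4CH(\delta)$, kill the boundary term by the glueing, reduce the $L^1$ norm to $\|\eta_\delta\widetilde u - \widetilde u\|_{L^1}$ via Lemma \ref{lema: ctrl term 2}, and conclude $\sup_\Omega(\overline u_\delta - u) \lesssim H(\delta)$. But the closing step has a genuine gap, and it is exactly the step where the theorem's content (the $x$-dependent constant $C_x$) lives. You assert that ``$\widehat u_\delta$ dominates the sup-convolution $\sup_{B_\delta(x)}\widetilde u$ up to the $H(\delta)\rho'$ correction,'' so that $\check u_\delta(x)\leq u(x)+C''H(\delta)$ would immediately give $\sup_{B_\delta(x)}u \leq u(x)+C_xH(\delta)$. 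This inequality goes the wrong way: $\eta_\delta\widetilde u$ is a mean-value regularization, hence \emph{dominated by} $\sup_{B_\delta}\widetilde u$, and the Kiselman--Legendre transform $\widetilde u_{c,\delta}$ is smaller still (take $t=\delta$ in the infimum to see $\widetilde u_{c,\delta}\leq \eta_\delta\widetilde u$). Indeed, the paper's own glueing argument uses $u_\delta(x_0)\leq \max_{\overline{B_\delta(x_0)}}u$, the opposite of what you need. So the bound $\sup_\Omega(\overline u_\delta-u)\lesssim H(\delta)$ does not, by itself, control $\sup_{B_\delta(x)}u - u(x)$, and your ``unwinding'' does not close.

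What is missing is the Kiselman--Legendre mechanism from \cite{DDGHKZ14}, which the paper runs after the stability step: from $\widetilde u_\delta(\widetilde x)-\widetilde u(\widetilde x)\leq (C_1+S(\widetilde\lambda))H(\delta)$ one bounds from below the radius $t(\widetilde x)$ realizing the infimum, using the monotonicity of $t\mapsto \eta_t\widetilde u+Kt^2$ and the lower bound $c\geq A^{-1}H(\delta)/2$ coming from the choice \eqref{eq: Kiselman reg func}; this yields $t(\widetilde x)\geq \delta\kappa(\widetilde x)$ with $\kappa(\widetilde x)\sim \widetilde\lambda(\widetilde x)^{2AM}$, and then monotonicity again converts the transform bound into the average-type estimate $\eta_\delta\widetilde u-\widetilde u\leq C_{\widetilde\lambda}\,\omega_H(\delta)$, as in \eqref{eq: final estimate}. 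It is precisely $\kappa$ that produces the blow-up of $C_x$ as $x\to 0$, which in your write-up is attributed vaguely to ``chart constants'' and $\pi_*\rho'(x)$. Finally, even with \eqref{eq: final estimate} in hand, passing from the one-sided mean-value estimate to the two-point modulus of continuity requires an argument exploiting subharmonicity (the paper invokes \cite[Theorem 3.4]{Ze20}); your alternative lower-bound argument via ``$\Delta u$ locally bounded'' is not available for general $f\in L^p$ — Lemma \ref{lema: holder intermediate sing} gives this only for the auxiliary problem with density bounded near $\partial\Omega$ and zero boundary data. So the skeleton is right, but the two decisive steps (lower bound on the optimal $t$, and the regularization-to-modulus-of-continuity conversion) are absent and cannot be replaced by the sup-convolution shortcut you propose.
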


\begin{proof}

Fix $\gamma < \frac{1}{(nq+1)}$ and $\varepsilon >0$ sufficiently small that $\gamma^{\prime} := \frac{\gamma}{(1- \varepsilon)} < \frac{1}{(nq+1)}$. 
Applying Theorem~\ref{teo: stability sing} for $\gamma^{\prime}$:

\noindent
\begin{align*}
    \underset{x \in \Omega}{\sup}(\overline{u}_{\delta}(x) - u(x) -4CH(\delta)) \leqslant &  \underset{x \in \partial\Omega}{\sup}(\overline{u}_{\delta}(x) - u(x) -4CH(\delta))^* + \overline{C}||  \overline{u}_{\delta} - u -4CH(\delta) ||_{L^{1}(\Omega)}^{\gamma^{\prime}}  \\ 
    \leqslant & \overline{C}||  \overline{u}_{\delta} - u - 4CH(\delta) ||_{L^{1}(\Omega_{2\delta} \setminus B_{\mu}(X_{\text{sing}}))}^{\gamma^{\prime}} \\
    \leqslant & \overline{C}||  \check{u}_{\delta} - u - 4CH(\delta) ||_{L^{1}(\Omega_{2\delta} \setminus B_{\mu}(X_{\text{sing}}))}^{\gamma^{\prime}} \\
    \leqslant & \overline{C} ||  \pi_*\widetilde{u}_{\delta} - u ||_{L^{1}(\Omega_{2\delta} \setminus B_{\mu}(X_{\text{sing}}))}^{\gamma^{\prime}}  \\
    \leqslant & \overline{C} ||  \pi_*\eta_{\delta}\widetilde{u} - u ||_{L^{1}(\Omega_{2\delta} \setminus B_{\mu}(X_{\text{sing}}))}^{\gamma^{\prime}} \\
    \leqslant & \overline{C} ||  \eta_{\delta}\widetilde{u} - \widetilde{u} ||_{L^{1}(\widetilde{\Omega_{2\delta}} \setminus \pi^{-1}[B_{\mu}(X_{\text{sing}})])}^{\gamma^{\prime}} \\
    \leqslant &  \overline{C}(\widetilde{C}^{\gamma^{\prime}}\delta^{(1-\varepsilon)\gamma^{\prime}}) \\
    \leqslant & C_0\delta^{\gamma}
\end{align*}

Close to $\partial \Omega$ and $X_{\text{sing}}$ the terms are controlled
by construction.
From lines 3 to 4, we use the fact that $H(\delta)\rho^{\prime} - 4CH(\delta) \leqslant 0$.
Remember that outside the singularity/divisors $\pi$ is an isomorphism.
The second to last passage is achieved by applying Lemma~\ref{lema: ctrl term 2}.
Hence, we get:
 \begin{align*}
     C_0 \delta^{\gamma} + 4CH(\delta) \geqslant & \underset{x \in \Omega}{\sup}(\overline{u}_{\delta}(x) - u(x))  \\
     \geqslant & \overline{u}_{\delta}(x) - u(x), \quad \forall x \in \Omega \\
     \geqslant & \pi_*\widetilde{u}_{\delta}(x) - u(x) + H(\delta)\pi_*\rho^{\prime}(x), \quad \forall x \in \Omega_{\delta} \\
     \geqslant & \widetilde{u}_{\delta}(\widetilde{x}) - \widetilde{u}(\widetilde{x}) + H(\delta)M\log(d_{\tau}(\widetilde{x},E)), \quad \forall \widetilde{x} \in \widetilde{\Omega_{\delta}}
 \end{align*}

As $\rho^{\prime}(\widetilde{x}) \geqslant M\log(d_{\tau}(\widetilde{x},E))$,
for any $\widetilde{x} \in \widetilde{\Omega}$ and some constant $M>0$.
Then, for $S(\widetilde{\lambda}) := M (-\log(d_{\tau}(\widetilde{x},E)))$:\footnote{The notation $\widetilde{\lambda}(\widetilde{x})$ in $\widetilde{X}$ is the analogous of $\lambda(x)$ in $X$.}

\noindent
\begin{equation}\label{eq: estimate Kiselman}
\widetilde{u}_{\delta}(\widetilde{x}) - \widetilde{u}(\widetilde{x}) \leqslant (C_1 +  S(\widetilde{\lambda}))H(\delta), \quad \forall \widetilde{x} \in \widetilde{\Omega_{\delta}}
\end{equation}

Following the proof of~\cite[Theorem D]{DDGHKZ14}, 
we get a uniform lower bound on the parameter $t = t(\widetilde{x})$
that realizes the infimum in the Kiselman-Legendre 
transform for $\widetilde{u}_{\delta}$ at a 
fixed $\widetilde{x} \in \widetilde{\Omega_{\delta}}$:

\noindent
\[
\widetilde{u}_{\delta}(\widetilde{x}) - \widetilde{u}(\widetilde{x}) = \eta_{t}\widetilde{u}(\widetilde{x}) + Kt^2 - \widetilde{u}(\widetilde{x}) - K\delta^2 -c  \log(t/\delta) \leqslant (C_1 + S(\widetilde{\lambda}))H(\delta)
\]

As $t \mapsto \eta_t \widetilde{u} + Kt^2$ is increasing, 
we have $\eta_t \widetilde{u} + Kt^2 - \widetilde{u} \geqslant 0$;
thus:

\noindent
\[
c \log (t/\delta) \geqslant -(C_2 + S(\widetilde{\lambda}))H(\delta).
\]

Since $c = A^{-1}H(\delta) - A^{-1}K\delta  = H(\delta)(A^{-1} - A^{-1}K\frac{\delta}{H(\delta)})$,
we get the bound $c \geqslant \frac{A^{-1}H(\delta)}{2}$,
by choosing  $\delta \leqslant \delta_3 := \min\{r_0/2,  \delta_0,\delta_1, \delta_2, \varpi^{-1}(1/2K) \}$ 
for $\varpi(\delta) = \frac{\delta}{H(\delta)}$, because $\omega_{H}(\delta) \gtrsim \delta$.
Then:

\noindent
\[
\delta \geqslant t(\widetilde{x}) \geqslant \delta \kappa(\widetilde{x}),
\]

\noindent where

\noindent
\begin{equation}\label{eq: kappa}
    \kappa(\widetilde{x}) := e^{-2A(C_2 + S(\widetilde{\lambda}))} = e^{-2AC^{\prime}_2}\cdot (\widetilde{\lambda}(\widetilde{x}))^{2AM}
\end{equation}

Finally, using that $t \mapsto \eta_t \widetilde{u} + Kt^2$
is increasing, $t(\widetilde{x}) \geqslant \delta \kappa(\widetilde{x})$ 
and the inequality \eqref{eq: estimate Kiselman}, for every $\widetilde{x} \in \widetilde{\Omega_{\delta}}$ 
we get:

\noindent
\begin{equation}\label{eq: conv est intermediate}
    \eta_{\delta \kappa(\widetilde{x})}\widetilde{u}(\widetilde{x}) - \widetilde{u}(\widetilde{x}) - K\delta^{2} \leqslant \widetilde{u}_{\delta}(\widetilde{x}) - \widetilde{u}(\widetilde{x}) \leqslant (C_1 + S(\widetilde{\lambda}))H(\delta)
\end{equation}

\noindent which leaves us with:

\noindent
\begin{equation}\label{eq: final estimate}
        \eta_{\delta}\widetilde{u}(\widetilde{x}) - \widetilde{u}(\widetilde{x}) \leqslant (C_3 + S(\widetilde{\lambda}))H(\delta/\kappa(\widetilde{\lambda})) \leqslant C_{\widetilde{\lambda}}\omega_{H}(\delta), \quad \forall \widetilde{x} \in \widetilde{\Omega_{\delta}}
\end{equation}

\noindent by the subadditivity of $\omega_{H}$ we get: $C_{\widetilde{\lambda}} = \frac{(C_3 + S(\widetilde{\lambda}))}{\kappa(\widetilde{\lambda})} = \frac{[C_3 + M^{\prime}(-\log(\widetilde{\lambda}(\widetilde{x})))]}{(\widetilde{\lambda}(\widetilde{x}))^{2AM}}$.

Fix $\widetilde{x} \in \widetilde{\Omega} \setminus E$, 
then there exists $\delta^{\prime} \leqslant \min\{ 1/2, \delta_3, \lambda(x)/2, d(x,\partial \Omega )/2\}$.
Now fix any $0 < \delta < \delta^{\prime}$, hence $\widetilde{x} \in \widetilde{\Omega_{\delta}}$.
Applying~\cite[Theorem 3.4]{Ze20} for $\overline{\widetilde{\Omega_{\delta}}}$, $\widetilde{u}$
and inequality \eqref{eq: final estimate}:

\noindent
\begin{equation}\label{eq: holder on resolution}
    |\widetilde{u}(\widetilde{x}) - \widetilde{u}(\widetilde{y})| \leqslant D_{0}(C_{\widetilde{\lambda}(\widetilde{x})} + C_{\widetilde{\lambda}(\widetilde{y})})\omega_{H}(d_{\tau}(\widetilde{x},\widetilde{y})),
\end{equation}

\noindent for any $\widetilde{y} \in \widetilde{\Omega_{\delta}}$ and some constant $D_{0} > 0$.
Then, one can pass the right-hand side of \eqref{eq: holder on resolution} to $\Omega_{\delta}$ and substitute 
$\widetilde{\lambda}$ by $\lambda$, remember that $\tau := \widetilde{C}\pi^{*}\beta + \varepsilon \theta $ 
for some constants $\widetilde{C}>1$ big enough and $0 < \varepsilon$ 
small enough, with some smooth closed (1,1)-form $\theta$ 
so that $\pi^{*}d \leqslant d_{\tau}$.
Hence:

\noindent
\begin{equation}\label{eq: holder on middle}
    |u(x) - u(y)| \leqslant (C_{\lambda(x)} + C_{\lambda(y)})\omega_{H}(d_{\tau}(\widetilde{x},\widetilde{y})),
\end{equation}

\noindent for any $\widetilde{y} \in \widetilde{\Omega_{\delta}}$ and some constant $C_{\lambda} >0 $ that goes to $+ \infty$
as $\lambda \to X_{\text{sing}}$.
Lastly, notice that we have the comparison $\tau_{\widetilde{x}} \leqslant K_{\lambda(x)}\pi^{*}\beta_{\widetilde{x}}$
for some constant $K_{\lambda}>0$ that goes to $+\infty$ 
as $\lambda \to X_{\text{sing}}$.
Now, by the subadditivity of $\omega_{H}$ we get:

\noindent
\begin{equation}\label{eq: holder on variety}
    |u(x) - u(y)| \leqslant (C^{\prime}_{\lambda(x)})\omega_{H}(d(x,y)) \leqslant (C^{\prime}_{\lambda(x)})\omega_{H}(\delta)
\end{equation}

\noindent for any $y \in B_{\delta}(x)$ and some constant $C^{\prime}_{\lambda} >0$ that 
goes to $+\infty$ as $\lambda \to X_{\text{sing}}$.
\noindent
\end{proof}

\begin{remark}
    One can notice that the proof of~\cite[Theorem 3.4]{Ze20}
    follows even if $\overline{\widetilde{\Omega_{\delta}}}$ have a 
    boundary because $\widetilde{u}$ is defined on 
    all $\widetilde{\Omega}$.
\end{remark}
\noindent

\begin{corollary}\label{cor: holder intermediate sing}

    Let $\hat{f} \in L^p(\Omega, \beta^n), p > 1$ with $\hat{f}$ 
    is bounded near $\partial \Omega$ and $0 \leqslant \gamma < \frac{1}{(nq + 1)}$. 
    Then $u(\Omega,\phi, 0), u(\Omega,0, \hat{f})$ have 
    modulus of continuity $C_{x}\max\{ \omega_{\phi}(t^{1/2}), t^{\gamma} \}$ 
    and $C_{x}t^{2\gamma}$ respectively, for $x \in \overline{\Omega}$ 
    and some constant $C_{x}>0$ that goes to $+\infty$ as $x \to X_{\text{sing}}$.

    \end{corollary}

    \begin{proof}
    Since we already have barriers for the problems 
    $MA(\Omega, \phi, 0)$ and $MA(\Omega, 0, \hat{f})$, we
    can choose $H$ to have modulus of continuity $\max\{ \omega_{\phi}(t^{1/2}), t^{\gamma} \}$, 
    since their respective barriers have better regularity then that.
    Then from Theorem~\ref{teo: final estimate} 
    the solution $u(\Omega, \phi, 0)$ will have modulus of 
    continuity $C_{x}\max\{ \omega_{\phi}(t^{1/2}), t^{\gamma} \}$.
    Similarly, we get that $u(\Omega, 0, \hat{f})$ will have
    modulus of continuity $C_{x}t^{\gamma}$, because the 
    boundary data is zero. One can improve the 
    regularity to $C_{x}t^{2\gamma}$ by first noticing that the 
    barriers for $MA(\Omega, 0, \hat{f})$ in 
    Lemma~\ref{lema: bdd barrier sing} have 
    Laplacians with uniformly bounded mass in $\Omega$, 
    using Chern-Levine-Nirenberg inequalities\footnote{It is known that for isolated singularities inside the domain the result follows in the exact same way as the local theory. For that one can follow directly the proof in~\cite[Theorem 3.9]{GZ17}.}
    (\cite{CLN69},~\cite[Theoreme 2.2]{Dem85}, see~\cite[Theorem 3.9]{GZ17} for a more modern presentation) 
    coupled with them being bounded and psh on 
    a neighborhood of $\overline{\Omega}$. Now, if $\beta$ is Kähler 
    then by the 
    comparison principle, we get that the solution 
    has Laplacian with uniformly bounded mass. 
    If $\beta$ is not then it suffices to notice that in a 
    neighborhood of $\Omega$ it is bigger/smaller then a Kähler form, 
    such as $dd^{c}(A\rho)$ for some constant $A>0$ big/small enough. 
    Lastly, as this argument is the same in $\Omega$ or $\widetilde{\Omega}$, 
    therefore we get the regularity $t^{2\gamma}$
    in Lemma~\ref{lema: ctrl term 2}.
    \noindent
    \end{proof}

\subsection{The general case}

Now we construct the barrier functions
for the solution of the general problem $u := u(\Omega, \phi, f)$:

\begin{proposition}\label{prop: barriers sing}

    Let $u = u(\Omega, \phi, f)$. 
    Then there exists two barrier functions 
    $v,w \in PSH(\Omega) \cap C^{0}(\overline{\Omega})$, 
    such that:
    \begin{enumerate}
        \item $v(\xi) = \phi(\xi) = -w(\xi)$, $\forall \xi \in \partial \Omega$,
        \item $v(z) \leqslant u(z) \leqslant -w(z), \forall z \in \Omega$,
        \item $\omega_{v,x}(t),\omega_{w,x}(t) \leqslant C_{x}\max\{ \omega_{\phi}(t^{1/2}) , t^{\gamma} \}$
         
    \end{enumerate}
    
    \noindent for some constant $C_{x} >0$ that goes 
    to $+\infty$ as $x$ approaches the singular point.

\end{proposition}

\begin{proof}
The upper-barrier will be $w = u(\Omega, -\phi, 0)$. 
Since $-w = u(\Omega, \phi, 0)$, $-w = \phi$ in $\partial \Omega$ 
and $0 = (dd^c (-w))^{n} \leqslant {(dd^c u)}^n$, by the comparison principle 
we get $-w \geqslant u$ in $\Omega$ and by the Corollary~\ref{cor: holder intermediate sing} 
we know the modulus of continuity of $w$.

For the lower-barrier, we take a bigger pseudoconvex domain $\Omega \Subset \widehat{\Omega} \Subset X$ 
and extend trivially by zero the density $f$ to $\widehat{\Omega}$, name it $\widehat{f}$. 
Note that $\widehat{f}$ is bounded near $\partial \widehat{\Omega}$.
The lower-barrier will be 
$v := u(\widehat{\Omega}, 0 , \widehat{f})|_{\Omega} + u(\Omega, \phi - u(\widehat{\Omega}, 0 , \widehat{f})|_{\partial\Omega}, 0)$.
By construction $v|_{\partial \Omega} = \phi$ and $(dd^{c}v)^{n} \geqslant f\beta^{n} + 0$ in $\Omega$, 
by the comparison principle $v\leqslant u$ in $\Omega$ and by 
Corollary~\ref{cor: holder intermediate sing} 
we know the modulus of continuity of $v$.
\noindent
\end{proof}

Now that we have barriers for the general case 
$MA(\Omega, \phi, f)$ we can prove~\ref{teo: main result}
and~\ref{teo: holder sing} :

\begin{proof}
    By choosing $\omega_{H}(t) = \max\{ \omega_{\phi}(t^{1/2}), t^{\gamma} \}$ 
    on Theorem~\ref{teo: final estimate} 
    one proves~\ref{teo: main result}, 
    this choice  of $H$ is possible because of the modulus of 
    continuity of the barrier functions constructed in 
    Proposition~\ref{prop: barriers sing}.
    \ref{teo: holder sing} 
    follows directly as for it $\omega_{\phi}(t) = t^{\alpha}$.  
\end{proof}

\bibliographystyle{alpha}
\bibliography{ref.bib}

\vspace{4pt}
\hrule

\end{document}